\theoremstyle{plain}
\newtheorem{theorem}{Theorem}
\numberwithin{equation}{section}
\newcommand{\R}{\mathbb{R}}
\newcommand{\C}{\mathbb{C}}
\newcommand{\ii}{\rm i}
\begin{document}

\title {Higher Trigonometry: A Class Of Nonlinear Systems}

\date{}

\author[P.L. Robinson]{P.L. Robinson}

\address{Department of Mathematics \\ University of Florida \\ Gainesville FL 32611  USA }

\email[]{paulr@ufl.edu}

\subjclass{} \keywords{}

\begin{abstract}
We study the initial value problem `$s\,' = c^{p - 1}, \; c\,' = -s^{p - 1}; \; \; s(0) = 0, \; c(0) = 1$' (both as a real system and as a complex system) for each integer $p > 2$, considering separately the cases `$p$ even' and `$p$ odd'.  
\end{abstract}

\maketitle

\medbreak

\section*{Introduction} 

\medbreak 

When viewed in terms of differential equations, trigonometry may be said to derive from the linear first-order system 
$$s\,' = c, \; c\,' = -s; \; \; s(0) = 0, \; c(0) = 1.$$
This system is the (exceptional) root of a sequence, whose subsequent members are the nonlinear first-order systems 
$$s\,' = c^{p - 1}, \; c\,' = -s^{p - 1}; \; \; s(0) = 0, \; c(0) = 1$$
for integers $p$ greater than two. Some of these nonlinear systems have received individual attention in the literature. Most extensively studied has been the case $p = 3$: the solutions $s$ and $c$ in this case are the Dixonian elliptic functions, named in honour of A.C. Dixon [2]; see also [1], [4] and [6]. The case $p = 4$ was considered in [7] and earlier, independently and from a different angle, in [5]; in that case, the quadratic expressions $s c, s^2$ and $c^2$ are elliptic functions though $s$ and $c$ themselves are not. More recently, the case $p = 6$ was considered in [8]; among other things, it was there shown that the quartic expression $s^2 c^2$ is elliptic but that $s^4, s^3 c, s c^3, c^4$ are not. 

\medbreak 

We here place on record certain properties that are shared by the nonlinear systems in this sequence, treating them first as real systems and then as complex systems. The behaviour of these systems is sharply dependent on the parity of $p$. In the real case, if $p$ is even then $s$ and $c$ are bounded, whereas if $p$ is odd then $s$ and $c$ undergo finite-time blow-up. In the complex case, if $p$ is even then $s$ and $c$ are holomorphic with real period in a band centred on the real axis, whereas if $p$ is odd then $s$ and $c$ have no such property. 

\medbreak 

\section*{Real Systems} 

\medbreak 

Here, we consider 
$$s\,' = c^{p - 1}, \; c\,' = -s^{p - 1}; \; \; s(0) = 0, \; c(0) = 1$$
as a real initial value problem ({\bf IVP}$\R$): that is, we consider real-valued solutions $s$ and $c$ defined on intervals containing $0$. As noted in the Introduction, the parity of the integer $p > 2$ exerts considerable influence; accordingly, we consider the cases `$p$ even' and `$p$ odd' separately. 

\medbreak 

{\it Let the integer $p > 2$ be even}. 

\medbreak 

We begin by noting that the even function $\xi \mapsto (1 - \xi^p)^{- 1 + 1/p}$ is integrable over the open interval $(-1, 1)$ and that 
$$A_p : = \int_0^1 \frac{{\rm d} \xi}{(1 - \xi^p)^{1 - 1/p}} = \frac{1}{p} \, \frac{\Gamma(\tfrac{1}{p})^2}{\Gamma( \tfrac{2}{p})}\, .$$

\medbreak 

Now, let us define 
$$\sigma_0 : [-1, 1] \to [- A_p, A_p]$$ 
by the rule that if $-1 \leqslant x \leqslant 1$ then 
$$\sigma_0 (x) = \int_0^x  \frac{{\rm d} \xi}{(1 - \xi^p)^{1 - 1/p}}\,.$$ 
This odd function $\sigma_0$ is continuous on $[-1, 1]$ and differentiable on $(-1, 1)$: if $-1 < x < 1$ then 
$$\sigma_0'(x) = (1 - x^p)^{- 1 + 1/p} > 0\,;$$
in particular, $\sigma_0'(x) \to \infty$ as $x \to \pm 1$. Consequently, $\sigma_0$ has an odd inverse 
$$s_0 : [- A_p, A_p] \to [-1, 1]$$ 
such that  
$$s_0\,' = (1 - s_0^p)^{1 - 1/p}$$ 
with vanishing one-sided derivatives at the endpoints and with $s_0(\pm A_p) = \pm 1$. In terms of $s_0$ we define the continuous even function 
$$c_0: [- A_p, A_p] \to [0, 1]$$
by 
$$c_0 = (1 - s_0^p)^{1/p} \,.$$ 
Within the open interval $(- A_p, A_p)$ we have $s_0\,' = c_0^{p - 1}$ of course, along with 
$$c_0\,' = (1/p) (1 - s_0^p)^{-1 + 1/p} \, (- p s_0^{p - 1} s_0\,') = - s_0^{p - 1}$$
by the chain rule; at the endpoints, $c_0$ vanishes and $c_0\,'(\pm A_p) = \mp 1$ as one-sided derivatives. 

\medbreak 

\begin{theorem} \label{-AA}
If $p > 2$ is even then {\bf IVP$\R$} has solution pair $(s_0, c_0)$ on the interval $[- A_p, A_p ]$ where $s_0$ is the inverse of 
$$\sigma_0 : [-1, 1] \to [ -A_p, A_p] : x \mapsto \int_0^x  \frac{{\rm d} \xi}{(1 - \xi^p)^{1 - 1/p}}$$ 
and where 
$$c_0 = (1 - s_0^p)^{1/p}.$$
\end{theorem}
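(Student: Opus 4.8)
The plan is to harvest the facts assembled during the construction of $s_0$ and $c_0$ that immediately precedes the statement, and to package them into a verification that $(s_0, c_0)$ satisfies \textbf{IVP$\R$} on the whole of $[-A_p, A_p]$, endpoints included. First I would record the initial conditions: since $\sigma_0(0) = \int_0^0(1-\xi^p)^{-1+1/p}\,{\rm d}\xi = 0$ and $s_0 = \sigma_0^{-1}$, we get $s_0(0) = 0$, whence $c_0(0) = (1 - 0^p)^{1/p} = 1$. Next, on the open interval $(-A_p, A_p)$: the map $\sigma_0$ is strictly increasing there, since its derivative $(1-x^p)^{-1+1/p}$ is positive, so the inverse function theorem applies and yields $s_0'(y) = 1/\sigma_0'(s_0(y)) = (1 - s_0(y)^p)^{1 - 1/p}$; because $c_0 = (1-s_0^p)^{1/p}$ we have $c_0^{p-1} = (1-s_0^p)^{(p-1)/p} = (1-s_0^p)^{1-1/p}$, so $s_0' = c_0^{p-1}$ holds on $(-A_p, A_p)$. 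Differentiating $c_0 = (1 - s_0^p)^{1/p}$ by the chain rule on that same interval, where $1 - s_0^p > 0$ so the fractional power is smooth, gives $c_0' = \tfrac1p(1-s_0^p)^{-1+1/p}(-p\, s_0^{p-1} s_0') = -s_0^{p-1}(1-s_0^p)^{-1+1/p}(1-s_0^p)^{1-1/p} = -s_0^{p-1}$, exactly as displayed in the text.

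It then remains to extend both differential equations to the two endpoints $\pm A_p$, and this is the only point requiring genuine care. Here I would invoke the standard one-variable fact that a function continuous on a closed interval and differentiable on its interior, whose interior derivative extends continuously to the closed interval, is in fact $C^1$ on the closed interval with that extension as its (one-sided) derivative. Applying this to $s_0$: its interior derivative equals $c_0^{p-1}$, and $c_0$ is continuous on $[-A_p, A_p]$ with $c_0(\pm A_p) = (1 - (\pm 1)^p)^{1/p} = 0$ since $p$ is even, so $c_0^{p-1}$ extends continuously with endpoint value $0$; hence $s_0$ is $C^1$ on $[-A_p, A_p]$ and $s_0'(\pm A_p) = 0 = c_0(\pm A_p)^{p-1}$. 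Applying the same principle to $c_0$: its interior derivative is $-s_0^{p-1}$, which is continuous on $[-A_p, A_p]$ with endpoint values $-(\pm 1)^{p-1} = \mp 1$ because $p-1$ is odd, so $c_0$ is $C^1$ on $[-A_p, A_p]$ and $c_0'(\pm A_p) = \mp 1 = -s_0(\pm A_p)^{p-1}$. Together with the initial-condition check, this establishes that $(s_0, c_0)$ is a solution pair on $[-A_p, A_p]$.

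The expected main obstacle is precisely this endpoint analysis. Near $x = \pm 1$ the integrand $(1-\xi^p)^{-1+1/p}$ blows up, so $\sigma_0'$ blows up and the inverse function theorem gives no direct control of $s_0'$, hence of $c_0'$, at $\pm A_p$; one must instead route the argument through the continuous-extension-of-the-derivative principle stated above (or, equivalently, through a local asymptotic analysis giving $1 - s_0(x) \sim \tfrac1p (A_p - x)^p$ and $c_0(x) \sim A_p - x$ as $x \to A_p^-$, which pins down the one-sided derivatives by hand). Everything away from the endpoints is the routine assembly of the inverse-function and chain-rule computations already exhibited in the excerpt, so I would keep the write-up brief there and spend the care on the endpoints; I would not address uniqueness, since the statement asserts only the existence of \emph{a} solution pair.
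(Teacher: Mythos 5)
Your proof is correct and follows essentially the same route as the paper, which simply points back to the construction preceding the theorem for the interior computations and the endpoint values. Your only addition is to make explicit the mean-value-theorem principle justifying the one-sided derivatives at $\pm A_p$, a detail the paper asserts without elaboration; this is a worthwhile clarification but not a different argument.
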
 

\begin{proof} 
That $(s_0, c_0)$ is defined and satisfies the differential equations $s_0\,' = c_0^{p - 1}$ and $c_0\,' = - s_0^{p - 1}$ was established prior to the Theorem; that the initial conditions $s_0(0) = 0$ and $c_0(0) = 1$ are satisfied is plain. 
\end{proof} 

\medbreak 

This solution pair $(s_0, c_0)$ extends beyond the interval $[- A_p, A_p ]$: in fact, it extends to the whole real line, with $4 A_p$ as period. In order to see that this is so, we extend initially to the interval $[- 2 A_p, 2 A_p ]$. Whereas on $[- A_p, A_p ]$ we took $s$ to be fundamental, it is convenient to let $c$ carry the burden of this extension. 

\medbreak 

Explicitly, for $- 1 \leqslant x \leqslant 1$ let us write  
$$\gamma_+ (x) = \int_x^1 \frac{{\rm d} \xi}{(1 - \xi^p)^{1 - 1/p}}$$
and thereby define a continuous function 
$$\gamma_+ : [-1, 1] \to [0, 2 A_p].$$
If $-1 < x < 1$ then 
$$\gamma_+'(x) = - \, (1 - x^p)^{- 1 + 1/p} < 0$$
so that $\gamma_+$ is strictly decreasing, with $\gamma_+ (-1) = 2 A_p$ and $\gamma_+ (1) = 0$. It follows that $\gamma_+$ has a continuous  inverse function 
$$c_+ : [0, 2 A_p] \to [-1, 1]$$ 
with 
$$c_+ (0) = 1, \; c_+(A_p) = 0, \; c_+(2 A_p) = -1;$$
its derivative is given by 
$$c_+' = - (1 - c^p)^{1 - 1/p}$$
in the open interval, and vanishes (as a one-sided derivative) at the endpoints. 

\medbreak 

\begin{theorem} \label{+}
If $p > 2$ is even then {\bf IVP$\R$} has solution pair $(s_+, c_+)$ on the interval $[0, 2 A_p]$ where $c_+$ is the inverse of 
$$\gamma_+ : [-1, 1] \to [0, 2 A_p]: x \mapsto \int_x^1 \frac{{\rm d} \xi}{(1 - \xi^p)^{1 - 1/p}}$$ 
and where 
$$s_+ = (1 - c_+^p)^{1/p}.$$ 
\end{theorem}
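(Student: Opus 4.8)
The plan is to repeat, with the roles of $s$ and $c$ interchanged, the construction carried out just before Theorem~\ref{-AA}, so that here $c_+$ is the ``fundamental'' function and $s_+$ the ``derived'' one. In fact nearly all the analytic content has already been supplied in the paragraph preceding the statement: there it is recorded that $c_+$ is continuous on $[0, 2A_p]$ with $c_+(0) = 1$, $c_+(A_p) = 0$, $c_+(2A_p) = -1$, that $c_+' = -(1 - c_+^p)^{1 - 1/p}$ on the open interval $(0, 2A_p)$, and that the one-sided derivatives of $c_+$ vanish at the two endpoints. What is left is to bring in $s_+$, confirm it is well defined, verify the two differential equations and the endpoint behaviour, and note the initial conditions.

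First I would observe that, since $p$ is even, $0 \leqslant c_+^p \leqslant 1$ everywhere on $[0, 2A_p]$; hence $1 - c_+^p \in [0, 1]$ and $s_+ = (1 - c_+^p)^{1/p}$ is a legitimately defined, continuous, nonnegative function there. This is the single place where evenness of $p$ is genuinely used, in that it lets $s_+$ be an honest nonnegative $p$-th root even where $c_+$ is negative. Next, on $(0, 2A_p)$ one has $s_+^{p - 1} = (1 - c_+^p)^{1 - 1/p}$, so the recorded formula for $c_+'$ is precisely $c_+' = -s_+^{p - 1}$. For the remaining equation I would differentiate $s_+ = (1 - c_+^p)^{1/p}$ by the chain rule --- valid on $(0, 2A_p)$ because $c_+$ takes values strictly in $(-1, 1)$ there --- to get
$$s_+' = \tfrac{1}{p}(1 - c_+^p)^{-1 + 1/p}(-p\,c_+^{p - 1} c_+') = -\,c_+^{p - 1}(1 - c_+^p)^{-1 + 1/p}\,c_+',$$
and then substitute $c_+' = -(1 - c_+^p)^{1 - 1/p}$, whereupon the powers of $1 - c_+^p$ cancel and $s_+' = c_+^{p - 1}$.

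Finally I would handle the endpoints and initial data. At $x = 0$, $c_+(0) = 1$ forces $s_+(0) = 0$, so the initial conditions $s_+(0) = 0$, $c_+(0) = 1$ are in force. For the one-sided derivatives of $s_+$, the identity $s_+' = c_+^{p - 1}$ on $(0, 2A_p)$ extends continuously to the endpoints with values $c_+(0)^{p - 1} = 1$ and $c_+(2A_p)^{p - 1} = (-1)^{p - 1} = -1$ (the last sign because $p$ is even); by the usual criterion --- $s_+$ continuous up to the endpoint and $s_+'$ having a limit there --- the one-sided derivatives of $s_+$ exist and equal those values. Together with the vanishing one-sided derivatives of $c_+$ already on record, this shows $(s_+, c_+)$ solves {\bf IVP$\R$} on the whole of $[0, 2A_p]$.

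I anticipate no real obstacle, since the whole argument mirrors the one preceding Theorem~\ref{-AA}. The points needing care are just those flagged above: invoking the parity of $p$ so that $s_+$ is a bona fide nonnegative root, keeping the exponents straight in the chain-rule cancellation, and obtaining differentiability of $s_+$ at the endpoints from the limiting behaviour of $s_+'$ rather than by differentiating the defining formula directly (where $(1 - c_+^p)^{-1 + 1/p}$ would blow up).
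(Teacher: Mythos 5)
Your proposal is correct and is exactly the argument the paper has in mind: the paper's proof of this theorem is just ``an easy exercise parallel to the proof of Theorem \ref{-AA},'' namely taking $c_+$ as the fundamental (inverse) function and deriving $s_+$ from it, and you have carried out that exercise accurately, including the chain-rule cancellation on the open interval and the one-sided derivatives at the endpoints via the limit of $s_+' = c_+^{p-1}$.
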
 

\begin{proof} 
An easy exercise parallel to the proof of Theorem \ref{-AA}. 
\end{proof} 

\medbreak 

Likewise, {\bf IVP}$\R$ has solution pair $(s_-, c_-)$ on the interval $[- 2 A_p, 0]$ where $c_-$ is the inverse of 
$$\gamma_- : [-1, 1] \to [-2 A_p, 0] : x \mapsto - \int_x^1 \frac{{\rm d} \xi}{(1 - \xi^p)^{1 - 1/p}}$$ 
and where 
$$s_- = - \, (1 - c_-^p)^{1/p}.$$

\medbreak 

We can piece together the solutions on $[- 2 A_p, 0]$ and $[0, 2 A_p]$ to obtain a solution on the interval $[ - 2 A_p, 2 A_p].$ 

\medbreak 

\begin{theorem} \label{-2A2A}
If $p > 2$ is even then {\bf IVP$\R$} has solution pair $(s, c)$ on the interval $[ - 2 A_p, 2 A_p]$ where 
$$s|_{[- 2 A_p, 0]} = s_-, \; c|_{[- 2 A_p, 0]} = c_-$$
and 
$$s|_{[0, 2 A_p]} = s_+, \; c|_{[0, 2 A_p]} = c_+.$$
\end{theorem}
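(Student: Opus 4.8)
The plan is to regard $(s, c)$ as a pair of functions pasted together from two pieces along their common point $x = 0$, and to verify in turn that this pasting is well defined, continuous, and differentiable, after which the differential equations and the initial conditions follow immediately from Theorem \ref{+} and the analogous statement recorded above for the interval $[-2 A_p, 0]$.

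First I would confirm consistency at the junction. From $\gamma_+(1) = 0$ and $\gamma_-(1) = 0$ we obtain $c_+(0) = c_-(0) = 1$, whence $s_+(0) = (1 - 1)^{1/p} = 0 = -(1 - 1)^{1/p} = s_-(0)$; thus the two definitions agree at $x = 0$, so $(s, c)$ is well defined, and in particular $s(0) = 0$ and $c(0) = 1$, which disposes of the initial conditions. Continuity of $s$ and $c$ on $[- 2 A_p, 2 A_p]$ is then automatic: each is continuous on $[- 2 A_p, 0]$ and on $[0, 2 A_p]$, with coinciding one-sided limits at $0$.

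Next comes differentiability. On the open interval $(0, 2 A_p)$ the pair $(s, c)$ coincides with $(s_+, c_+)$, so $s\,' = c^{p - 1}$ and $c\,' = - s^{p - 1}$ there by Theorem \ref{+}; symmetrically, these equations hold on $(- 2 A_p, 0)$ via $(s_-, c_-)$; and at the outer endpoints $\pm 2 A_p$ they hold as one-sided derivatives by the same results. At $x = 0$, Theorem \ref{+} supplies the right-hand derivatives $s_+'(0) = c_+(0)^{p - 1} = 1$ and $c_+'(0) = - s_+(0)^{p - 1} = 0$, while the companion statement on $[- 2 A_p, 0]$ supplies the left-hand derivatives $s_-'(0) = 1$ and $c_-'(0) = 0$. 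Since the one-sided derivatives agree, $s$ and $c$ are differentiable at $0$ with $s\,'(0) = 1 = c(0)^{p - 1}$ and $c\,'(0) = 0 = - s(0)^{p - 1}$. Hence the system is satisfied throughout $[- 2 A_p, 2 A_p]$.

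The one point that needs care — the ``main obstacle,'' though it is more a matter of vigilance than of difficulty — is the behaviour at $x = 0$: one must know that the relevant one-sided derivatives of $s_\pm$ and $c_\pm$ at $0$ exist and take the stated values (this is exactly where the preceding theorems do the work), and then invoke the elementary pasting fact that a continuous function which is differentiable on each side of a point with matching one-sided derivatives is differentiable at that point. Everything else is bookkeeping.
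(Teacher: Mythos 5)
Your proposal is correct and is essentially the paper's own argument, which simply notes that $(s_-, c_-)$ and $(s_+, c_+)$ agree at the origin together with their one-sided derivatives; you have merely spelled out the junction values ($s_\pm(0)=0$, $c_\pm(0)=1$, $s_\pm{}'(0)=1$, $c_\pm{}'(0)=0$) and the elementary pasting fact that the paper leaves implicit. No gap.
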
 

\begin{proof} 
The functions $(s_-, c_-)$ and $(s_+, c_+)$ agree at the origin, as do their one-sided derivatives. 
\end{proof} 

\medbreak 

We remark here that $c$ is even and $s$ is odd. To see that $c$ is even, note that if $-1 \leqslant x \leqslant 1$ then 
$$\gamma_+ (x) = \Big( \int_0^1 - \int_0^x \Big) \: \frac{{\rm d} \xi}{(1 - \xi^p)^{1 - 1/p}} = A_p - \sigma_0 (x)$$ 
while 
$$\gamma_- (x) = - \: \Big( \int_0^1 - \int_0^x \Big) \: \frac{{\rm d} \xi}{(1 - \xi^p)^{1 - 1/p}} = - A_p + \sigma_0 (x).$$ 
Now, if $0 \leqslant u \leqslant 2 A_p$ then $A_p - u \in [- A_p, A_p]$ so that $A_p - u = \sigma_0 (x)$ for a unique $x \in [-1, 1]$ and the formulae above show that $c_+ (u) = x = c_- (- u)$. To see that $s$ is odd, use this result along with $s_+ = (1 - c_+^p)^{1/p}$ and $s_- = - \, (1 - c_-^p)^{1/p}$. 

\medbreak 

We remark further that the pair $(s, c)$ of Theorem \ref{-2A2A} restricts to the interval $[- A_p, A_p]$ as the pair $(s_0, c_0)$ of Theorem \ref{-AA}: this is clear on account of the classical Picard existence-uniqueness theorem; we leave as an exercise its verification from the very definitions of the pairs involved. 

\medbreak 

As claimed, the solution pair $(s, c)$ extends naturally to the whole real line. 

\medbreak 

\begin{theorem} \label{realline}
If $p > 2$ is even then the solution $(s, c)$ to {\bf IVP}$\R$ extends to the whole real line with $4 A_p$ as period. 
\end{theorem}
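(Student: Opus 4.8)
The plan is to assemble the extension from translated copies of the pair $(s, c)$ on $[-2 A_p, 2 A_p]$ produced by Theorem~\ref{-2A2A}, exploiting that the system is autonomous. The one genuinely substantive input is the value of $(s,c)$ at the endpoints $\pm 2 A_p$. From Theorem~\ref{+} we have $c(2 A_p) = c_+(2 A_p) = -1$ and $s(2 A_p) = s_+(2 A_p) = (1 - c_+(2 A_p)^p)^{1/p} = (1 - (-1)^p)^{1/p} = 0$, the final equality being precisely where the evenness of $p$ enters; and because $c$ is even and $s$ is odd (as remarked just after Theorem~\ref{-2A2A}) we get likewise $c(-2 A_p) = -1$ and $s(-2 A_p) = 0$. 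Thus $(s, c)$ takes the common value $(0, -1)$ at both ends of its domain. For each integer $k$, autonomy of the system $s\,' = c^{p-1}$, $c\,' = -s^{p-1}$ shows that $t \mapsto \big(s(t - 4 k A_p),\, c(t - 4 k A_p)\big)$ is a solution on the interval $I_k := [(4 k - 2) A_p,\, (4 k + 2) A_p]$, with the appropriate one-sided derivatives at the endpoints inherited from $(s, c)$.

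Next I would glue. Consecutive intervals meet in a single point, $I_k \cap I_{k+1} = \{(4 k + 2) A_p\}$, and there the two translated copies agree in value: one gives $(s(2 A_p), c(2 A_p)) = (0, -1)$, the other $(s(-2 A_p), c(-2 A_p)) = (0, -1)$. Hence there is a well-defined continuous pair $(S, C) : \R \to \R$ whose restriction to $I_k$ is the $k$-th translated copy, and it satisfies $S(0) = 0$, $C(0) = 1$. On the interior of each $I_k$ the pair $(S, C)$ manifestly solves the differential equations; at a junction point $t_0 = (4 k + 2) A_p$ the left-hand derivative of $S$ equals the one-sided derivative of $s$ at $2 A_p$, which by the construction is $c(2 A_p)^{p-1} = (-1)^{p-1} = -1$, while the right-hand derivative of $S$ equals the one-sided derivative of $s$ at $-2 A_p$, namely $c(-2 A_p)^{p-1} = (-1)^{p-1} = -1$; the two agree, so $S$ is differentiable at $t_0$ with $S\,'(t_0) = C(t_0)^{p-1}$, and the same reasoning gives $C\,'(t_0) = -S(t_0)^{p-1} = 0$. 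Therefore $(S, C)$ solves {\bf IVP}$\R$ on all of $\R$. Periodicity is then immediate: for $t \in I_k$ one has $S(t + 4 A_p) = s\big((t + 4 A_p) - 4 (k + 1) A_p\big) = s(t - 4 k A_p) = S(t)$, and similarly for $C$, so $4 A_p$ is a period.

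Lastly I would note that this extension is unique: the vector field $(s, c) \mapsto (c^{p-1}, -s^{p-1})$ is polynomial, hence locally Lipschitz, so Picard--Lindel\"of applies and $(S, C)$ is \emph{the} extension of $(s, c)$ to $\R$. I do not expect a real obstacle in any of this. The whole force of the statement lies in the endpoint identity $s(\pm 2 A_p) = 0$, which depends on $p$ being even (for $p$ odd it fails, matching the blow-up behaviour announced in the Introduction), and the only place demanding care is keeping the one-sided derivatives straight at $\pm 2 A_p$, where $\sigma_0\,'$ and its analogues become infinite but the inverse functions $s$ and $c$ stay continuously differentiable.
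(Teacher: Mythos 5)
Your proposal is correct and follows essentially the same route as the paper: verify that $(s,c)$ and its one-sided derivatives take matching values $(0,-1)$ and $(-1,0)$ at $\pm 2A_p$, then glue $4A_p$-translates of the solution on $[-2A_p, 2A_p]$ using autonomy. You merely spell out the gluing and the uniqueness remark in more detail than the paper does.
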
 

\begin{proof} 
Note that $c(- 2 A_p) = - 1 = c(2 A_p)$ and $s(- 2 A_p) = 0 = s(2 A_p)$; note also that (as one-sided derivatives) $c\,' (-2 A_p) = 0 = c\,'(2 A_p)$ and $s\,'(- 2 A_p) = -1 = s\,'(2 A_p).$ These agreements ensure that $4 A_p$-periodic extension of $(s, c)$ yields a solution pair to {\bf IVP}$\R$ with domain the whole of $\R$. 
\end{proof} 

\medbreak 

Of course, the periodically-extended solution $(s, c)$ to {\bf IVP}$\R$ maintains the parity of the original solution, in that $s$ is odd and $c$ is even. 

\medbreak 

{\it Let the integer $p > 2$ be odd}. 

\medbreak 

We begin by noting that the function $\eta \mapsto (1 + \eta^p)^{- 1 + 1/p}$ is integrable over $(0, \infty)$ and that 
$$B_p : = \int_0^{\infty} \frac{{\rm d} \eta}{(1 + \eta^p)^{1 - 1/p}} = \frac{1}{p} \frac{\Gamma(1 - \tfrac{2}{p}) \, \Gamma(\tfrac{1}{p})}{\Gamma(1 - \tfrac{1}{p})} \, .$$

\medbreak 

Now, let us define 
$$\sigma : (- \, \infty, 1] \to (- \, B_p, A_p]$$
by the rule that if $- \, \infty < x \leqslant 1$ then 
$$\sigma (x) = \int_0^x  \frac{{\rm d} \xi}{(1 - \xi^p)^{1 - 1/p}}\,;$$
here, note that if $\xi < 0$ then $\xi^p < 0$ because $p$ is odd, so $\sigma$ is properly defined. The derivative $\sigma'$ is strictly positive, so that $\sigma$ is strictly increasing; further, $\sigma(1) = A_p$ and if $x \downarrow - \infty$ then $- x \uparrow \infty$ so 
$$\sigma (x) = \int_0^x \frac{{\rm d} \xi}{(1 - \xi^p)^{1 - 1/p}} = - \, \int_0^{- x} \frac{{\rm d} \eta}{(1 + \eta^p)^{1 - 1/p}} \, \downarrow - B_p \, .$$
Consequently, $\sigma$ has an inverse 
$$s: (- B_p, A_p] \to (- \infty, 1]$$
such that 
$$s\,' = (1 - s^p)^{1 - 1/p} .$$ 

\medbreak 

\begin{theorem} \label{odd-}
If $p > 2$ is odd then {\bf IVP}$\R$ has solution pair $(s, c)$ on the interval $(- B_p, A_p]$ where $s$ is the inverse of 
$$\sigma: (- \infty, 1] \to (- B_p, A_p] : x \mapsto \int_0^x  \frac{{\rm d} \xi}{(1 - \xi^p)^{1 - 1/p}}$$ 
and where 
$$c = (1 - s^p)^{1/p}.$$
\end{theorem}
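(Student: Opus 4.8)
The plan is to mirror, essentially verbatim, the argument already carried out in the even case prior to Theorem~\ref{-AA}, making only the adjustments forced by the fact that now $p$ is odd and the relevant domain for $\sigma$ is the half-line $(-\infty,1]$ rather than $[-1,1]$. The construction of $\sigma$, its inverse $s$, and the companion function $c=(1-s^p)^{1/p}$ has in fact been fully laid out in the paragraphs immediately preceding the statement, so the proof is reduced to checking that the pair $(s,c)$ satisfies the two differential equations and the two initial conditions.

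First I would record that $s$ is differentiable on the open interval $(-B_p,A_p)$ with $s\,' = (1-s^p)^{1-1/p}$; this was already obtained by inverting $\sigma$, since $\sigma'(x) = (1-x^p)^{-1+1/p}$ is finite and strictly positive for every $x<1$ (here the oddness of $p$ is what makes $1-x^p>0$ for all $x\le 1$, in contrast to the even case). Hence $s\,' = (1-s^p)^{1-1/p} = \big((1-s^p)^{1/p}\big)^{p-1} = c^{p-1}$, which is the first equation. Next I would differentiate $c=(1-s^p)^{1/p}$ by the chain rule exactly as in the displayed computation before Theorem~\ref{-AA}:
$$c\,' = \tfrac1p (1-s^p)^{-1+1/p}(-p s^{p-1} s\,') = -\,s^{p-1}(1-s^p)^{-1+1/p}(1-s^p)^{1-1/p} = -\,s^{p-1},$$
giving the second equation. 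Finally, $s(0)=0$ because $\sigma(0)=0$, and then $c(0) = (1-0)^{1/p} = 1$, so the initial conditions hold.

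The one genuine point requiring a remark, rather than a real obstacle, is the behaviour at the right endpoint $x=A_p$: there $s(A_p)=1$ and $s\,'(A_p)=(1-1)^{1-1/p}=0$ as a one-sided derivative (since $p>2$ forces $1-1/p>0$, so the power of $1-s^p$ genuinely vanishes), and correspondingly $c(A_p)=0$ with $c\,'(A_p) = -1$ as a one-sided derivative; this is the same endpoint phenomenon already seen for $s_0$ and $c_0$ at $A_p$ in the even case, so nothing new needs to be proved. In short, the proof is: ``Everything was established in the discussion preceding the Theorem; the differential equations follow from $s\,'=(1-s^p)^{1-1/p}$ together with the chain-rule computation for $c\,'$, and the initial conditions $s(0)=0$, $c(0)=1$ are immediate from $\sigma(0)=0$.'' I do not anticipate any step that is more than routine; the only thing to be careful about is keeping track of which endpoints of $(-B_p,A_p]$ are included and that the derivative identities are asserted on the open interval (with the appropriate one-sided statement at $A_p$).

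\begin{proof}
That $(s,c)$ is defined and satisfies the differential equations $s\,' = c^{p-1}$ and $c\,' = -s^{p-1}$ was established prior to the Theorem: on $(-B_p, A_p)$ the inverse function $s$ satisfies $s\,' = (1 - s^p)^{1 - 1/p} = \big((1 - s^p)^{1/p}\big)^{p-1} = c^{p-1}$, and then the chain rule gives
$$c\,' = (1/p)(1 - s^p)^{-1 + 1/p}(- p\, s^{p-1} s\,') = -\, s^{p-1}.$$
That the initial conditions $s(0) = 0$ and $c(0) = 1$ are satisfied is plain, since $\sigma(0) = 0$.
\end{proof}
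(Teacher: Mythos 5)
Your proof is correct and takes essentially the same route as the paper, which simply notes that the verification proceeds as in the construction leading up to Theorem~\ref{-AA}; you have merely written out that routine verification (the identity $s\,'=(1-s^p)^{1-1/p}=c^{p-1}$, the chain-rule computation for $c\,'$, and the initial conditions) explicitly.
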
 

\begin{proof} 
Verification that $s: (- B_p, A_p] \to (- \infty, 1]$ and $c: (- B_p, A_p] \to [0, \infty)$ satisfy {\bf IVP}$\R$ proceeds essentially as did the similar verification that led up to Theorem \ref{-AA}. 
\end{proof} 

\medbreak 

Likewise, {\bf IVP}$\R$ has a solution pair $(s, c)$ on the interval $[0, A_p + B_p)$ where 
$$c : [0, A_p + B_p) \to (- \infty, 1]$$
is the inverse of 
$$\gamma : (- \infty, 1] \to [0, A_p + B_p) : x \mapsto \int_x^1 \frac{{\rm d} \xi}{(1 - \xi^p)^{1 - 1/p}}$$
and where 
$$s = (1 - c^p)^{1/p}.$$

\medbreak 

\begin{theorem} \label{odd}
If $p > 2$ is odd then {\bf IVP}$\R$ has a solution pair $(s, c)$ on the interval $(- B_p, A_p + B_p)$ as maximal domain. 
\end{theorem}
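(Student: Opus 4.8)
The plan is to glue the two solution pairs already in hand — the pair on $(- B_p, A_p]$ produced in Theorem \ref{odd-}, and the pair on $[0, A_p + B_p)$ described immediately above — along their common interval of definition $[0, A_p]$, and then to show that the resulting solution on $(- B_p, A_p + B_p)$ admits no proper extension.

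First I would check that the two pairs agree on the overlap $[0, A_p]$. Both are solution pairs of {\bf IVP}$\R$ on $[0, A_p]$ carrying the same data at the origin, and the vector field $(u, v) \mapsto (v^{p - 1}, - u^{p - 1})$ is polynomial, hence locally Lipschitz; so the classical Picard uniqueness theorem forces the two pairs to coincide on $[0, A_p]$ — exactly the style of argument invoked after Theorem \ref{-2A2A}. (If a hands-on check is preferred: the substitution $\xi^p + \eta^p = 1$ converts $\int_0^x \rd \xi/(1 - \xi^p)^{1 - 1/p}$ into $\int_{(1 - x^p)^{1/p}}^1 \rd \eta/(1 - \eta^p)^{1 - 1/p}$, which identifies $\sigma$ with $\gamma$ on the relevant ranges, and thereby the two parametrizations with each other.) Consequently the recipe
$$s|_{(- B_p, A_p]} = \sigma^{-1}, \quad s|_{[0, A_p + B_p)} = (1 - c^p)^{1/p}, \qquad c|_{(- B_p, A_p]} = (1 - s^p)^{1/p}, \quad c|_{[0, A_p + B_p)} = \gamma^{-1}$$
is unambiguous and defines a pair $(s, c)$ on $(- B_p, A_p + B_p)$; since each constituent satisfies the differential equations on its own open subinterval and the initial conditions hold at $0$, this $(s, c)$ solves {\bf IVP}$\R$ on $(- B_p, A_p + B_p)$.

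It remains to see that $(- B_p, A_p + B_p)$ is the maximal interval of existence. Reading off the ranges of $\sigma : (- \infty, 1] \to (- B_p, A_p]$ and $\gamma : (- \infty, 1] \to [0, A_p + B_p)$ gives $s(t) = \sigma^{-1}(t) \to - \infty$ as $t \downarrow - B_p$ and $c(t) = \gamma^{-1}(t) \to - \infty$ as $t \uparrow A_p + B_p$: finite-time blow-up at each end. Now if $(s, c)$ extended to a solution on an interval containing a point to the right of $A_p + B_p$ (respectively to the left of $- B_p$), that extension would — by Picard uniqueness again — restrict to $(s, c)$ on $(- B_p, A_p + B_p)$ and so would be continuous, hence bounded, near $A_p + B_p$ (respectively near $- B_p$), contradicting the blow-up just noted. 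Hence no proper extension exists, and $(- B_p, A_p + B_p)$ is the maximal domain.

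The only point calling for genuine care is the agreement of the two constituent pairs on $[0, A_p]$; once that is secured, the gluing is automatic and the maximality reduces to the routine finite-time blow-up argument above. If one takes the explicit route rather than bare uniqueness, I expect the substitution computation $\xi^p + \eta^p = 1$ to be the fussiest step.
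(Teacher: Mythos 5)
Your proposal is correct and follows essentially the same route as the paper: glue the pair from Theorem \ref{odd-} and the pair on $[0, A_p + B_p)$ via Picard uniqueness on the overlap $[0, A_p]$, then obtain maximality from the blow-up of $s$ as $t \downarrow -B_p$ and of $c$ as $t \uparrow A_p + B_p$. The paper merely states the blow-up facts and calls maximality "plain," so your extra sentence deducing a contradiction from boundedness of a hypothetical extension is just a slightly more explicit rendering of the same argument.
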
 

\begin{proof} 
The solution pairs displayed in Theorem \ref{odd-} and the subsequent comment agree throughout the intersection $[0, A_p]$ of their domains (by virtue of the uniqueness clause in the classical Picard theorem or, as an exercise, directly from their definitions) and therefore patch together to yield a solution pair on the union $(-B_p, A_p + B_p)$ of their domains. This union is plainly the maximal domain: if $x \downarrow - B_p$ then $s(x) \downarrow - \infty$ and $c(x) \uparrow \infty$; if $x \uparrow A_p + B_p$ then $s(x) \uparrow \infty$ and $c(x) \downarrow - \infty$. 
\end{proof} 

\medbreak 

Here we see the sharp contrast between the cases `$p$ even' and `$p$ odd': if $p$ is even, then $s$ and $c$ are bounded and periodic on the whole real line; if $p$ is odd, then $s$ and $c$ suffer finite-time blow-up in each direction, having the same bounded open interval as their maximal domain. 

\medbreak

Regardless of the parity of $p$, it is straightforward to compare $A_p$ and $B_p$. We may calculate their ratio as follows, using the Euler reflexion formula for the Gamma function: 
$$\frac{A_p}{B_p} = \frac{\Gamma(1 - \tfrac{1}{p})\,\Gamma(\tfrac{1}{p})}{\Gamma(1 - \tfrac{2}{p})\,\Gamma(\tfrac{2}{p})} = \frac{\sin \tfrac{2 \pi}{p}}{\sin \tfrac{\pi}{p}} = 2 \cos \tfrac{\pi}{p}\, .$$
In particular, if $p > 3$ then $\pi/p < \pi/3$ so that $1/2 < \cos (\pi/p) < 1$ and therefore $B_p < A_p < 2 B_p$. The exceptionial `Dixonian' case has $A_3 = B_3$. 

\medbreak 

\section*{Complex Systems}

\medbreak 

We now pass on to a fresh consideration of 
$$s\,' = c^{p - 1}, \; c\,' = -s^{p - 1}; \; \; s(0) = 0, \; c(0) = 1$$
as a complex initial value problem ({\bf IVP}$\C$): that is, we consider complex-valued solutions $s$ and $c$ defined on complex domains containing $0$. Again, the parity of the integer $p$ has significant consequences for the behaviour of the system; however, we shall begin with some general observations that do not depend on parity. Throughout this section, it is to be understood that $p > 2$, though this is not stated explicitly in the enunciation of theorems. 

\medbreak 

First of all, we record the following counterpart to the trigonometric `Pythagorean' identity $\cos^2 + \sin^2 = 1$. 

\medbreak 

\begin{theorem} \label{Pyth}
Any solution pair to {\bf IVP}$\C$ on a connected open neighbourhood of $0$ satisfies the identity $s^p + c^p = 1$. 
\end{theorem}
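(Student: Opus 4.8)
The plan is to differentiate the quantity $s^p + c^p$ and show it is constant on the connected domain, then evaluate at $0$. Concretely, I would set $F = s^p + c^p$, where $s$ and $c$ are holomorphic on a connected open neighbourhood of $0$ solving the system; since $s$ and $c$ are holomorphic there, so is $F$, and its complex derivative is $F\,' = p\,s^{p-1} s\,' + p\,c^{p-1} c\,'$. Substituting the differential equations $s\,' = c^{p-1}$ and $c\,' = -s^{p-1}$ gives $F\,' = p\,s^{p-1} c^{p-1} - p\,c^{p-1} s^{p-1} = 0$.

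From $F\,' \equiv 0$ on a connected open subset of $\C$, standard complex analysis (or the identity theorem applied to $F - F(0)$) forces $F$ to be constant. Evaluating at the base point and using the initial conditions $s(0) = 0$, $c(0) = 1$ yields $F(0) = 0^p + 1^p = 1$, so $s^p + c^p = 1$ throughout.

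I do not anticipate any real obstacle here; the argument is a one-line computation once one notes that holomorphy legitimises termwise differentiation and that $F\,' = 0$ on a connected open set implies $F$ is constant. The only point that deserves a word of care is that we are working with genuine holomorphic functions on an open set (not merely along a path), so the powers $s^{p-1}$, $c^{p-1}$ and the derivative rules are the usual ones from complex analysis; connectedness of the domain is exactly what is needed to pass from "locally constant" to "constant." I would state these two facts explicitly and leave the displayed computation as the body of the proof.
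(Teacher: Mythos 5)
Your argument is exactly the paper's proof, written out in full: differentiate $s^p + c^p$, observe that the system forces the derivative to vanish, use connectedness to conclude constancy, and evaluate at $0$. Correct and complete; nothing to add.
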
 

\begin{proof} 
Differentiate $s^p + c^p$; evaluate $s^p + c^p$ at $0$. 
\end{proof} 

\medbreak 

For {\bf IVP}$\C$ there exists a unique solution pair defined in a suitably small disc about $0$; this claim is justified by a simple application of the classical Picard existence-uniqueness theorem for initial value problems, as follows. 

\medbreak 

\begin{theorem} \label{r}
The system {\bf IVP}$\C$ has a unique solution pair $(s, c)$ in the open disc $B_r (0)$ about $0$ of radius $r = (p - 2)^{p - 2} / (p - 1)^{p - 1}.$  
\end{theorem}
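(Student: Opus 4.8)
The plan is to deduce this from the classical Picard--Lindel\"of (Cauchy--Lipschitz) existence-uniqueness theorem for the vector-valued ordinary differential equation $\mathbf{w}\,' = \mathbf{F}(\mathbf{w})$ with $\mathbf{w} = (s, c)$, $\mathbf{F}(s, c) = (c^{p-1}, -s^{p-1})$, and initial value $\mathbf{w}(0) = (0, 1)$. Since $\mathbf{F}$ is a polynomial map $\C^2 \to \C^2$, it is holomorphic, hence locally Lipschitz, so the complex Picard theorem (equivalently, the Banach fixed-point argument applied to the integral equation $\mathbf{w}(z) = \mathbf{w}(0) + \int_0^z \mathbf{F}(\mathbf{w}(\zeta))\,\rd\zeta$ along rays from $0$) guarantees a unique holomorphic solution pair in some disc $B_\rho(0)$. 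The content of the theorem is the explicit value $r = (p-2)^{p-2}/(p-1)^{p-1}$, so the real work is extracting this radius from the standard majorant estimate.

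First I would set up the closed bidisc (or polydisc) $D = \{(s, c) : |s| \le a, \; |c - 1| \le b\}$ on which $\mathbf{F}$ is bounded and Lipschitz, with bound $M = \max_D \|\mathbf{F}\|$ and Lipschitz constant $L$; the standard Picard radius is then $\min(a, b)/M$ (with the usual caveat about also needing $\rho < 1/L$, which in the holomorphic setting is subsumed, since the Picard iterates converge on the full interval of the a priori bound). To get the sharpest disc I would optimize the choice of $a$ and $b$. The natural move is to take $a = b$ by symmetry between $s$ and $c$ near the initial point — or, more precisely, to observe that on such a neighbourhood $|c| \le 1 + b$ and $|s| \le a$, so $\|\mathbf{F}\|$ is controlled by $\max\{(1+b)^{p-1}, a^{p-1}\}$, and one wants to choose the neighbourhood so that the radius $\min(a,b)/M$ is as large as possible.

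The calculation I expect to be the crux: setting $a = b$ and writing the neighbourhood as $|s| \le b$, $|c - 1| \le b$, we get $M = (1+b)^{p-1}$ (this dominates $b^{p-1}$), so the radius is $g(b) := b/(1+b)^{p-1}$, and I would maximize $g$ over $b > 0$. Differentiating, $g'(b) = 0$ gives $(1+b)^{p-1} - b(p-1)(1+b)^{p-2} = 0$, i.e. $1 + b - (p-1)b = 0$, i.e. $b = 1/(p-2)$. Substituting back, $g(1/(p-2)) = \dfrac{1/(p-2)}{\big(1 + 1/(p-2)\big)^{p-1}} = \dfrac{1/(p-2)}{\big((p-1)/(p-2)\big)^{p-1}} = \dfrac{(p-2)^{p-2}}{(p-1)^{p-1}}$, which is exactly $r$.

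So the proof reduces to: (i) cite the complex Picard theorem to get a unique holomorphic solution in $B_\rho(0)$ for $\rho$ equal to the majorant radius; (ii) carry out the one-variable optimization above to identify the optimal majorant radius as $r = (p-2)^{p-2}/(p-1)^{p-1}$; (iii) note uniqueness is part of the Picard conclusion (and is in any case consistent with Theorem \ref{Pyth}). The only genuine obstacle is being careful about the precise form of the Picard radius estimate one invokes — whether it is $\min(a,b)/M$ outright, or whether the version in hand also imposes $\rho \le 1/L$; with the holomorphic (majorant-series) version of the theorem the bound $\min(a,b)/M$ is the correct and clean statement, and I would phrase the proof to use that version, so that the optimization gives exactly the stated $r$ with no further adjustment.
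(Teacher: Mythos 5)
Your proposal is correct and is essentially the paper's own proof: both fix the polydisc $|s|\leqslant b$, $|c-1|\leqslant b$, bound the right-hand sides by $(1+b)^{p-1}$, invoke the holomorphic Picard theorem to get a unique solution on the disc of radius $b/(1+b)^{p-1}$, and optimize over $b$ to find $b = 1/(p-2)$ and $r = (p-2)^{p-2}/(p-1)^{p-1}$. The paper simply states the optimization result (citing Hille for the form of the Picard radius), whereas you carry out the calculus explicitly; there is no substantive difference.
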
 

\begin{proof} 
Fix $b > 0$: if $|s| \leqslant b$ and $|c - 1| \leqslant b$ then $\max \, (|s^{p - 1}|, \, |c^{p - 1}| ) \leqslant (b + 1)^{p - 1}$; it follows from the Picard theorem (for which, see Section 2.3 of [3]) that {\bf IVP}$\C$ has a unique holomorphic solution pair in the open disc about $0$ of radius $b / (b + 1)^{p - 1}$. This radius is maximized to $(p - 2)^{p - 2} / (p - 1)^{p - 1}$ by taking $b \uparrow 1 / (p - 2)$. 
\end{proof} 

\medbreak 

We remark that the radius of the disc can be increased to $r^{1/p}$ by instead solving the initial value problem `$s\,' = (1 - s^p)^{1 - 1/p}; \, s(0) = 0$' for $s$ alone and then defining $c = (1 - s^p)^{1/p}$; here, principal-valued powers are taken and Theorem \ref{Pyth} is involved. 

\medbreak 

The solution pair $(s, c)$ is `real' in the following sense. 

\medbreak 

\begin{theorem} \label{conj}
If $z \in B_r(0)$ then $\overline{s(z)} = s(\overline{z})$ and $\overline{c(z)} = c(\overline{z}).$
\end{theorem}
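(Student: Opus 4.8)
The plan is to exploit the uniqueness clause of Theorem \ref{r} by producing a second solution pair out of $(s,c)$ via conjugation and showing it must coincide with $(s,c)$. First I would observe that the disc $B_r(0)$ is symmetric under complex conjugation, so the formulas $\tilde s(z) := \overline{s(\overline z)}$ and $\tilde c(z) := \overline{c(\overline z)}$ define functions on all of $B_r(0)$. The next step is to check that $\tilde s$ and $\tilde c$ are holomorphic: writing $\tilde s = \kappa \circ s \circ \kappa$ with $\kappa$ the conjugation map, one sees that $\tilde s$ is a composition of two antiholomorphic maps with the holomorphic map $s$, hence holomorphic, with $\tilde s'(z) = \overline{s'(\overline z)}$ (and similarly for $\tilde c$). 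This is the only point requiring a small argument rather than a one-line verification, so I would regard it as the main — though modest — obstacle; it is essentially the mechanism behind the Schwarz reflection principle.

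Once holomorphy is in hand, the remaining steps are routine. The initial conditions transfer immediately: $\tilde s(0) = \overline{s(0)} = 0$ and $\tilde c(0) = \overline{c(0)} = 1$. For the differential equations, since $p-1$ is a positive integer, complex conjugation commutes with the $(p-1)$st power, so
$$\tilde s'(z) = \overline{s'(\overline z)} = \overline{c(\overline z)^{\,p-1}} = \bigl(\overline{c(\overline z)}\bigr)^{p-1} = \tilde c(z)^{\,p-1},$$
and likewise $\tilde c'(z) = \overline{c'(\overline z)} = \overline{-\,s(\overline z)^{\,p-1}} = -\,\tilde s(z)^{\,p-1}$. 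Thus $(\tilde s, \tilde c)$ is a holomorphic solution pair to {\bf IVP}$\C$ on $B_r(0)$.

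Finally, I would invoke the uniqueness assertion of Theorem \ref{r}: since $(s,c)$ is the unique solution pair on $B_r(0)$, we must have $\tilde s = s$ and $\tilde c = c$ throughout $B_r(0)$, which is exactly the claim $\overline{s(z)} = s(\overline z)$ and $\overline{c(z)} = c(\overline z)$ for all $z \in B_r(0)$.
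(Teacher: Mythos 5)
Your proposal is correct and follows exactly the paper's own argument: define the conjugated pair $S(z)=\overline{s(\overline z)}$, $C(z)=\overline{c(\overline z)}$, verify it solves {\bf IVP}$\C$, and conclude by the uniqueness clause of Theorem \ref{r}. You simply spell out the routine verification (holomorphy of the conjugated functions and compatibility of conjugation with integer powers) that the paper compresses into ``by direct calculation.''
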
 

\begin{proof} 
For $z \in B_r (0)$ define $S(z) = \overline{s(\overline{z})}$ and $C(z) = \overline{c(\overline{z})}$. By direct calculation, the pair $(S, C)$ satisfies {\bf IVP}$\C$; by Theorem \ref{r}, $(S, C) = (s, c)$. 
\end{proof} 

\medbreak 

Let us write 
$$\alpha = e^{2 \pi \ii /p}$$
and write 
$$\beta = e^{ \pi \ii /p}$$
so that $\beta^2 = \alpha$.

\medbreak 

The solution pair $(s, c)$ of Theorem \ref{r} exhibits a $p$-fold rotational symmetry: under the action of $\alpha$ by multiplication, $s$ is equivariant and $c$ is invariant; so $s^p$ is also invariant. 

\medbreak 

\begin{theorem} \label{alpha}
If $z \in B_r (0)$ then $s(\alpha z) = \alpha s(z)$ and $c( \alpha z) = c(z)$. 
\end{theorem}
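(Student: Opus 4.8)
The plan is to mimic the proof of Theorem \ref{conj}: construct a second solution pair of {\bf IVP}$\C$ on $B_r(0)$ out of $(s,c)$ by the substitution $z \mapsto \alpha z$, and then invoke the uniqueness clause of Theorem \ref{r} to identify it with $(s,c)$.

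First I would observe that multiplication by $\alpha$ maps the disc $B_r(0)$ onto itself, since $|\alpha| = 1$; hence the formulas below define holomorphic functions on all of $B_r(0)$. Set $S(z) = \alpha^{-1} s(\alpha z)$ and $C(z) = c(\alpha z)$ for $z \in B_r(0)$. The initial conditions are then immediate: $S(0) = \alpha^{-1} s(0) = 0$ and $C(0) = c(0) = 1$.

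Next I would verify the differential equations by the chain rule together with $\alpha^p = 1$. For $S$ one has $S'(z) = s'(\alpha z) = c(\alpha z)^{p-1} = C(z)^{p-1}$. For $C$ one has $C'(z) = \alpha\, c'(\alpha z) = -\alpha\, s(\alpha z)^{p-1}$; on the other hand $-S(z)^{p-1} = -\alpha^{-(p-1)} s(\alpha z)^{p-1} = -\alpha^{1-p} s(\alpha z)^{p-1}$, and since $\alpha^{1-p} = \alpha \cdot \alpha^{-p} = \alpha$ these two expressions agree. Thus $(S,C)$ solves {\bf IVP}$\C$ on $B_r(0)$.

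Finally, by the uniqueness assertion of Theorem \ref{r} we conclude $(S,C) = (s,c)$, which unwinds to $\alpha^{-1} s(\alpha z) = s(z)$ and $c(\alpha z) = c(z)$ for $z \in B_r(0)$, i.e. $s(\alpha z) = \alpha\, s(z)$ and $c(\alpha z) = c(z)$. The only point calling for any care is the bookkeeping with powers of $\alpha$ in the check of $C' = -S^{p-1}$, and that rests on nothing more than $\alpha^p = 1$; everything else is routine. As an immediate byproduct, $s(\alpha z)^p = \alpha^p\, s(z)^p = s(z)^p$, so $s^p$—and hence $c^p = 1 - s^p$ by Theorem \ref{Pyth}—is invariant under this rotation.
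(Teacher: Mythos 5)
Your proposal is correct and is essentially the paper's own proof: the paper likewise sets $S(z) = \overline{\alpha}\, s(\alpha z)$ (which equals your $\alpha^{-1} s(\alpha z)$ since $|\alpha| = 1$) and $C(z) = c(\alpha z)$, verifies {\bf IVP}$\C$ by "direct calculation," and concludes by the uniqueness clause of Theorem \ref{r}. You have simply written out the direct calculation — including the key bookkeeping $\alpha^{1-p} = \alpha$ from $\alpha^p = 1$ — which the paper leaves implicit.
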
 

\begin{proof} 
For $z \in B_r (0)$ define $S(z) = \overline{\alpha} s(\alpha z)$ and $C(z) = c(\alpha z)$. By direct calculation, the pair $(S, C)$ satisfies {\bf IVP}$\C$; by Theorem \ref{r}, $(S, C) = (s, c)$. 
\end{proof} 

\medbreak 

Thus far, we have discussed the solution pair $(s, c)$ only on the disc $B_r (0)$. We now wish to consider the question of extending the domain of this pair further into the complex plane. Note that the symmetries of the pair under conjugation and rotation will continue to hold for extensions to appropriately symmetric connected domains. 

\medbreak 

Regarding the possibility that an isolated singularity of an extended $s$ or $c$ might be a pole, we have the following result. 

\medbreak 

\begin{theorem} \label{pole}
An extension of $s$ or $c$ can have a pole only if $p = 3$. 
\end{theorem}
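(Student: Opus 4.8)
The plan is to argue by analysis of pole orders, exploiting the Pythagorean identity of Theorem \ref{Pyth} to tie the behaviour of $s$ and $c$ together at a shared singularity, and then to extract a numerical constraint directly from the differential equations.

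Suppose first that an extension of $s$ is meromorphic near a point $z_0$ with a pole there, of order $m \geqslant 1$. Since the identity $s^p + c^p = 1$ persists on any connected domain to which the pair is continued, near $z_0$ we have $c^p = 1 - s^p$, which is meromorphic with a pole of order $mp$ at $z_0$. The first task is to check that this forces $c$ itself to be meromorphic near $z_0$, with a pole of order exactly $m$: writing $1 - s^p = (z - z_0)^{-mp}\, u(z)$ with $u$ holomorphic and non-vanishing at $z_0$, a local holomorphic $p$-th root $u^{1/p}$ exists, and then $c = \zeta\,(z - z_0)^{-m}\, u(z)^{1/p}$ for a $p$-th root of unity $\zeta$ that is constant by continuity; hence $c$ has a pole of order $m$ at $z_0$. (Equivalently, one notes that $c' = -s^{p - 1}$ is meromorphic near $z_0$, so $c$ is an antiderivative of a meromorphic germ and is therefore meromorphic unless it acquires a logarithmic branch point — which the single-valuedness of $c^p = 1 - s^p$ excludes.)

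Now that $s$ and $c$ are both meromorphic at $z_0$, each with a pole of order $m$, I would compare pole orders across the equation $s\,' = c^{p - 1}$. The left-hand side $s\,'$ has a pole of order exactly $m + 1$ at $z_0$ (its leading Laurent coefficient $-m\,a_{-m}$ is non-zero because $m \geqslant 1$), while the right-hand side $c^{p - 1}$ has a pole of order exactly $m(p - 1)$. Equating orders gives $m + 1 = m(p - 1)$, that is $m(p - 2) = 1$; since $m$ and $p - 2$ are positive integers, this forces $m = 1$ and $p = 3$. The case in which an extension of $c$, rather than $s$, has a pole is handled identically after interchanging the roles of $s$ and $c$, using $c\,' = -s^{p - 1}$ and $s^p = 1 - c^p$ in place of their counterparts; incidentally, the argument also shows that in the surviving case $p = 3$ any such pole is simple.

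The pole-order bookkeeping is entirely routine; the one step that genuinely needs care — and the only place the reasoning could break down — is the passage from "$s$ has a pole at $z_0$" to "$c$ is meromorphic (not merely singular) at $z_0$ with a pole of the same order", which is precisely what the Pythagorean identity delivers.
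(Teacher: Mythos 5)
Your argument is correct and reaches the paper's conclusion, but by a slightly different route. The paper works with two unknown pole orders, $m$ for $s$ and $n$ for $c$, and extracts one relation from each differential equation: $m + 1 = n(p-1)$ from $s\,' = c^{p-1}$ and $n + 1 = m(p-1)$ from $c\,' = -s^{p-1}$; solving this linear system gives $m = n = 1/(p-2)$, whence $p = 3$. You instead invoke the Pythagorean identity $s^p + c^p = 1$ of Theorem \ref{Pyth} to show first that the two pole orders coincide, and then need only one of the differential equations, arriving at $m + 1 = m(p-1)$, i.e.\ $m(p-2) = 1$. The arithmetic is equivalent, but your version buys something real: the paper simply asserts that ``a pole of either extended function is a pole of the other'' and tacitly treats $c$ as meromorphic at the point, whereas your local factorization $c = \zeta\,(z - z_0)^{-m}\,u(z)^{1/p}$, with $u$ holomorphic and non-vanishing and $\zeta$ a $p$-th root of unity constant by continuity, actually proves that $c$ is meromorphic there with a pole of exactly the same order $m$ --- which, as you say, is the only step that genuinely needs care. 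Both proofs also yield the same bonus observation that in the surviving case $p = 3$ the pole is simple, consistent with the Dixonian functions being simply-poled elliptic functions.
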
 

\begin{proof} 
A pole of either extended function is a pole of the other. Consider a pole, of order $m$ for $s$ and $n$ for $c$. From $s\,' = c^{p - 1}$ follows $m + 1 = n (p - 1)$ and from $c\,' = - s^{p - 1}$ follows $n + 1 = m (p - 1)$. These equations in $m$ and $n$ have the unique solution $m = n = 1/(p - 2)$. As $m$ and $n$ are positive integers, $p = 3$ follows. 
\end{proof} 

\medbreak 

When $p = 3$, the functions $s$ and $c$ extend to simple-poled (Dixonian) elliptic functions in the plane: see [2] and [6]. When $p = 4$, the squares $s^2$ and $c^2$ extend to simply-poled elliptic functions in the plane: see [7]. 

\medbreak 

Extension of the functions $s$ and $c$ is intimately connected to extension of their quotient. Before we investigate this link, we study the quotient $s/c$. 
For obvious reasons, we denote this quotient by $t$; further, we continue this notation for such extensions as appear below. 

\medbreak 

\begin{theorem} \label{t}
The quotient $t = s/c$ satisfies the differential equation 
$$(t\,')^p = (1 + t^p)^2$$ 
on $B_r(0)$ and the initial conditions 
$$t\,'(0) = 1 \; \; and \; \; t(0) = 0.$$
\end{theorem}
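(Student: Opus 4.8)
The plan is to differentiate the quotient $t=s/c$ directly using the quotient rule, substitute the differential equations defining the system, and then invoke the Pythagorean identity of Theorem \ref{Pyth} to simplify. Concretely, on $B_r(0)$ the pair $(s,c)$ is holomorphic with $c(0)=1\neq 0$, so $t=s/c$ is holomorphic on a neighbourhood of $0$ and
\[
t\,' = \frac{s\,'c - s\,c\,'}{c^2} = \frac{c^{p-1}\cdot c - s\cdot(-s^{p-1})}{c^2} = \frac{c^p + s^p}{c^2}.
\]
By Theorem \ref{Pyth}, $s^p + c^p = 1$ on this connected neighbourhood of $0$, hence $t\,' = 1/c^2$. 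Raising to the $p$-th power gives $(t\,')^p = 1/c^{2p} = (1/c^p)^2$, and since $c^p = 1 - s^p$ and $t^p = s^p/c^p$, we have $1/c^p = 1 + s^p/c^p$ wait — more directly $1 + t^p = 1 + s^p/c^p = (c^p + s^p)/c^p = 1/c^p$, so $(1+t^p)^2 = 1/c^{2p} = (t\,')^p$, which is the asserted differential equation.

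For the initial conditions, $t(0) = s(0)/c(0) = 0/1 = 0$ is immediate, and from $t\,' = 1/c^2$ we get $t\,'(0) = 1/c(0)^2 = 1$. That completes the argument.

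I do not anticipate a genuine obstacle here: the only point requiring a modicum of care is the legitimacy of dividing by $c$, i.e.\ ensuring $c$ is nonvanishing on the domain where we work. Since $c$ is continuous with $c(0)=1$, it is nonzero on some disc about $0$; one should either shrink $B_r(0)$ accordingly or simply remark that the identity, being an identity between holomorphic functions, is established on the open set where $c\neq 0$ and in particular near $0$, which suffices for the statement as phrased. A second, even more minor, bookkeeping point is that Theorem \ref{Pyth} is applied on a connected open neighbourhood of $0$, which $B_r(0)$ certainly is.
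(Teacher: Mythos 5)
Your proof is correct and follows essentially the same route as the paper: quotient rule plus the system gives $t\,' = (c^p+s^p)/c^2 = 1/c^2$ by Theorem \ref{Pyth}, and then $(t\,')^p = (c^p+s^p)^2/c^{2p} = (1+t^p)^2$, with the initial conditions immediate. The remark about nonvanishing of $c$ near $0$ is a reasonable (and harmless) extra precaution that the paper leaves tacit.
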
 

\begin{proof} 
Regarding the differential equation, 
$$(s/c)\,' = \frac{c s\,' - s c\,'}{c^2} = \frac{c^p + s^p}{c^2} = 1/c^2$$
so 
$$(s/c)\,'\,^p = \frac{1}{c^{2 p}} = \frac{(c^p + s^p)^2}{c^{2 p}} = \Big(1 + \big(s/c\big)^p\Big)^2$$
by Theorem \ref{Pyth}. Regarding the initial conditions, there is little to say. 
\end{proof} 

\medbreak 

At this point, we recall some classical Schwarz-Christoffel theory: the rule 
$$\tau: w \mapsto \int_0^w \frac{{\rm d} \zeta}{(1 + \zeta^p)^{2/p}}$$
maps the open unit disc $\mathbb{D}$ conformally to the open regular $p$-gon $\mathbb{P}$ that is centred at $0$ and has the positive real number 
$$K_p = \int_0^1 \frac{{\rm d} \zeta}{(1 + \zeta^p)^{2/p}}$$
as the midpoint of one of its edges; further, the same rule maps the closed unit disc $\overline{\mathbb{D}}$ to the closed regular $p$-gon $\overline{\mathbb{P}}$ homeomorphically. 

\medbreak 

\begin{theorem} \label{text}
The quotient $t = s/c$ extends conformally to $t: \mathbb{P} \to \mathbb{D}$ and homeomorphically to $t: \overline{\mathbb{P}} \to \overline{\mathbb{D}}$ as the inverse of the map 
$$\tau: w \mapsto \int_0^w \frac{{\rm d} \zeta}{(1 + \zeta^p)^{2/p}}\, .$$
\end{theorem}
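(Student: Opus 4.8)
The plan is to show that the locally-defined holomorphic quotient $t = s/c$ near $0$ is, up to the Schwarz--Christoffel map $\tau$, its own inverse, and then to promote this local identification to the stated global conformal and homeomorphic extensions. Concretely, I would first observe from Theorem \ref{t} that on $B_r(0)$ one has $(t\,')^p = (1 + t^p)^2$ with $t(0) = 0$ and $t\,'(0) = 1$; since $t$ is holomorphic near $0$ with $t\,'(0) = 1 \neq 0$, it is locally biholomorphic there, so its local inverse $\tau_0 = t^{-1}$ is a well-defined holomorphic function on a neighbourhood of $0$ in the $w$-plane with $\tau_0(0) = 0$. Differentiating the relation $w = t(\tau_0(w))$ and using the differential equation gives $1 = t\,'(\tau_0(w))\,\tau_0'(w)$, hence $\tau_0'(w)^p = 1 / (t\,'(\tau_0(w)))^p = 1/(1 + t(\tau_0(w))^p)^2 = 1/(1 + w^p)^2$. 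Choosing the branch of the $p$-th root so that $\tau_0'(0) = 1$ (which matches $t\,'(0) = 1$), we get $\tau_0'(w) = (1 + w^p)^{-2/p}$ with principal-valued power near $w = 0$, and integrating from $0$ yields $\tau_0(w) = \int_0^w (1 + \zeta^p)^{-2/p}\,\rd\zeta = \tau(w)$. Thus $t$ coincides near $0$ with the local inverse of $\tau$.

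Next I would invoke the quoted Schwarz--Christoffel facts: $\tau$ maps $\mathbb{D}$ conformally onto $\mathbb{P}$ and $\overline{\mathbb{D}}$ homeomorphically onto $\overline{\mathbb{P}}$. In particular $\tau$ is a biholomorphism $\mathbb{D} \to \mathbb{P}$, so it has a global holomorphic inverse $\tau^{-1}: \mathbb{P} \to \mathbb{D}$. Since $0 \in \mathbb{P}$ and a neighbourhood of $0$ in $\mathbb{P}$ is the conformal image under $\tau$ of a neighbourhood of $0$ in $\mathbb{D}$, the germ of $\tau^{-1}$ at $0$ agrees with the germ of $\tau_0 = t$ computed above (both invert the same conformal germ $\tau$ near $0$). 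By the identity theorem for holomorphic functions on the connected open set $\mathbb{P}$, the function $\tau^{-1}: \mathbb{P} \to \mathbb{D}$ is the unique holomorphic extension of $t$ to $\mathbb{P}$; this gives the conformal extension $t: \mathbb{P} \to \mathbb{D}$. The homeomorphic extension to $\overline{\mathbb{P}} \to \overline{\mathbb{D}}$ is then immediate from the stated homeomorphism property of $\tau$ on $\overline{\mathbb{D}}$, since the inverse of a homeomorphism is a homeomorphism.

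One point worth spelling out is why the germ obtained by inverting $\tau$ near $0$ really is the same as the germ of $s/c$: both are holomorphic, both vanish at $0$, both have derivative $1$ at $0$, and both satisfy $(f')^p = (1 + f^p)^2$ — but that ODE alone does not pin down a germ, so the argument should go through the explicit power-series inversion above (equivalently, note that $\tau \circ t = \mathrm{id}$ as germs because $t$ was \emph{defined} on $B_r(0)$ and we verified $t^{-1} = \tau$ there). I expect this matching-of-germs bookkeeping to be the only real subtlety; once it is in place, everything else is a direct appeal to the cited Schwarz--Christoffel theory and the identity theorem. A secondary point is to confirm that the branch choices are consistent: the principal branch of $(1 + \zeta^p)^{-2/p}$ used in the definition of $\tau$ is exactly the one forced by $t\,'(0) = 1 > 0$, so there is no branch mismatch, and $\tau$ as written is genuinely the analytic continuation that the Schwarz--Christoffel theorem describes.
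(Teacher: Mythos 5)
Your proposal is correct and follows essentially the same route as the paper: identify $t$ locally near $0$ as the inverse of $\tau$ by taking the principal $p$-th root in $(t\,')^p = (1+t^p)^2$ and integrating, then invoke the Identity Theorem together with the quoted Schwarz--Christoffel facts. The only cosmetic difference is that you differentiate the local inverse $\tau_0 = t^{-1}$ rather than integrating $t\,' = (1+t^p)^{2/p}$ directly, which is the same separation-of-variables computation written the other way round.
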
 

\begin{proof} 
It follows from Theorem \ref{t} that in a sufficiently small disc about $0$ and with principal-valued power, 
$$t\,' = (1 + t^p)^{2/p};$$
by integration, it follows that if $z$ is in such a disc then  
$$z = \int_0^{t(z)} \frac{{\rm d} \zeta}{(1 + \zeta^p)^{2/p}}$$ 
with principal-valued power again. An appeal to the Identity Theorem concludes the proof. 

\end{proof} 

\medbreak 

Being centred at $0$ and having the positive real $K_p$ as the midpoint of one of its edges, the regular $p$-gon $\mathbb{P}$ has the complex number $\beta \, L_p$ for one of its vertices, where 
$$L_p = K_p \, \sec (\pi/p)$$
on geometrical grounds. Thus, the regular $p$-gon $\mathbb{P}$ has $K_p$ as the radius of its incircle and $L_p$ as the radius of its circumcircle. The typical vertex of $\mathbb{P}$ is $\beta^n \, L_p$ with $n$ odd: the value of $t$ at this point is $\beta^n$; in particular, $t^p = -1$ at each vertex. The values of $t$ along the edge joining $\beta^{2 m - 1}\,L_p$ to $\beta^{2 m + 1}\,L_p$ run the short arc of the unit circle from $\beta^{2 m - 1}$ to $\beta^{2 m + 1}$;  the value of $t$ at the midpoint $\alpha^m \, K_p$ of this edge is $\alpha^m$.   

\medbreak 

We shall consider the problem of extending $t$ beyond the $p$-gon $\mathbb{P}$ more fully in due course. For now, we merely observe that $t$ does not extend holomorphically in a disc round any vertex of $\mathbb{P}$ unless $p = 3$ or $p = 4$.  

\medbreak 

\begin{theorem} \label{tnot}
If $p > 4$ then $t$ has no holomorphic extension to an open set containing a vertex of $\mathbb{P}$. 
\end{theorem}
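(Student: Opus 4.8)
The plan is to derive a contradiction from the differential equation $(t\,')^p = (1 + t^p)^2$ of Theorem~\ref{t} by comparing orders of vanishing at the vertex. So, suppose $p > 4$ and that $t$ extends holomorphically to a connected open set $U$ containing a vertex $v = \beta^n L_p$ of $\mathbb{P}$, with $n$ odd. First I would note that the polynomial identity $(t\,')^p = (1 + t^p)^2$, valid on $B_r(0)$ by Theorem~\ref{t}, propagates throughout the (simply connected) $p$-gon $\mathbb{P}$ by analytic continuation, and then, by the Identity Theorem applied on the nonempty open set $U \cap \mathbb{P}$, holds on all of $U$; thus both $(t\,')^p$ and $(1 + t^p)^2$ are genuine holomorphic functions on $U$ that agree there.

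Next I would extract the two local expansions at $v$. By the homeomorphic extension of Theorem~\ref{text} we have $t(v) = \beta^n$, so $u := t - \beta^n$ is holomorphic on $U$ with $u(v) = 0$; since $t$ is nonconstant (being conformal on $\mathbb{P}$), $u$ has a finite zero of some order $k \geqslant 1$ at $v$, so $t\,' = u\,'$ has order $k - 1$ there and $(t\,')^p$ has order $p(k - 1)$. On the other side, $n$ odd gives $\beta^{np} = e^{\ii \pi n} = -1$, hence
$$1 + t^p = 1 - (1 + \beta^{-n} u)^p = -\, p\, \beta^{-n}\, u \,\bigl(1 + O(u)\bigr),$$
so that $1 + t^p$ has order exactly $k$ at $v$ and $(1 + t^p)^2$ has order $2k$.

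Finally, equating orders in $(t\,')^p = (1 + t^p)^2$ forces $p(k - 1) = 2k$, i.e. $k(p - 2) = p$, i.e. $k = p/(p - 2) = 1 + 2/(p - 2)$; for $k$ to be a positive integer, $p - 2$ must divide $2$, so $p \in \{3, 4\}$, contradicting $p > 4$. (Reassuringly, the excluded values $p = 3$ and $p = 4$ are precisely those for which an extension around a vertex does exist, consistent with the remark preceding the theorem.)

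The genuinely substantive point, rather than the bookkeeping, is the step that turns the putative extension into a clean order equation: one must be sure that $u$ --- equivalently $t\,'$ --- has a well-defined finite order of vanishing at $v$ (which uses that $t$ is nonconstant together with the Identity Theorem) and that the leading coefficient $-p\,\beta^{-n}$ in the expansion of $1 + t^p$ is nonzero, so that $\mathrm{ord}_v(1 + t^p)$ truly equals $\mathrm{ord}_v(u)$ and not some larger integer. Everything else --- the binomial expansion with its unit remainder factor $\bigl(1 + O(u)\bigr)$, and the propagation of the differential equation --- is routine.
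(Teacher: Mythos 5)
Your proof is correct, and it takes a genuinely different route from the paper's. The paper differentiates the identity $(t\,')^p = (1 + t^p)^2$ twice, each time cancelling a factor of $t\,'$ (legitimate since $t\,' \nequiv 0$ on the connected set $U$), and then evaluates the resulting identity at the vertex $a$: the left side vanishes there because $t\,'(a) = 0$ and the surviving powers of $t\,'$ have positive exponents $p-4$ and $p-3$ precisely when $p > 4$, while the right side reduces to $2p\,t(a)^{2p-2} \neq 0$. Your argument instead reads the same differential equation as an equality of orders of vanishing at the vertex: writing $k = \mathrm{ord}_v(t - \beta^n) \geqslant 1$, the left side has order $p(k-1)$ and the right side order $2k$ (your check that the leading coefficient $-p\beta^{-n}$ is nonzero, so that $\mathrm{ord}_v(1+t^p)$ is exactly $k$, is indeed the one point that needs care), and the Diophantine condition $k = p/(p-2)$ admits an integer solution only for $p \in \{3,4\}$. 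Your version is arguably more informative: it not only rules out $p > 4$ but predicts the local behaviour in the exceptional cases ($t - \beta^n$ vanishing to order $3$ when $p = 3$ and order $2$ when $p = 4$), in the same spirit as the order-counting in Theorem \ref{pole}; the paper's version avoids any discussion of orders of vanishing at the cost of two rounds of differentiation. The preliminary steps you flag --- propagating the identity to $U$ via the Identity Theorem, the nonconstancy of $t$, and $t(v) = \beta^n$ from the boundary homeomorphism --- are all sound.
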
 

\begin{proof} 
Throughout the proof, we work in a (without loss) connected open set $U$ containing the vertex $a$ of $\mathbb{P}$. Any holomorphic extension of $t$ continues to satisfy 
$$(t\,')^p = (1 + t^p)^2$$
whence we deduce by differentiation that 
$$p (t\,')^{p - 1} t\,'' = 2(1 + t^p) p t^{p - 1} t\,'$$
or 
$$t\,' ((t\,')^{p - 2} t\,'' - 2(1 + t^p) t^{p - 1}) = 0.$$
As $U$ is connected and $t\,' \nequiv 0$ we deduce that 
$$(t\,')^{p - 2} t\,'' = 2(1 + t^p) t^{p - 1}$$
and therefore 
$$(p - 2) (t\,')^{p - 3} (t\,'')^2 + (t\,')^{p - 2} t\,''' = 2 p \,t^{2 p - 2} t\,' + 2(1 + t^p) (p - 1) t^{p - 2} t\,'$$
so that, again dropping $t'$ as a factor,  
$$(p - 2) (t\,')^{p - 4} (t\,'')^2 + (t\,')^{p - 3} t\,''' = 2 p \,t^{2 p - 2} + 2(1 + t^p) (p - 1) t^{p - 2}\,.$$
However, evaluate this purported equality at $a$: the left side is zero because $t\,'(a) = 0$ and $p > 4$; the right side is nonzero because $1 + t(a)^p = 0$ so $t(a) \neq 0$. This contradiction precludes the holomorphic extension of $t$. 
\end{proof} 

\medbreak 

The case $p = 3$ is exceptional: in this Dixonian case, the function $t$ satisfies the perhaps surprising relation $t(z) = - s(- z)$ and is therefore elliptic; see [2] Section 17 and [6] Theorem 4. The case $p = 4$ is also exceptional, $t$ again being elliptic: in fact, $t = - 2 \wp/\wp'$ where $\wp$ is the lemniscatic Weierstrass function with $g_2 = 1$ and $g_3 = 0$; see [7] Theorem 7 and thereafter. 

\medbreak 

Now, we can use the holomorphic extension $t: \mathbb{P} \to \mathbb{D}$ to fashion a solution pair to {\bf IVP}$\C$ in the open regular $p$-gon $\mathbb{P}$; of course, this pair will extend the pair from Theorem \ref{r} on account of the uniqueness clause therein, so we feel free to denote it by the same symbol $(s, c)$.  

\medbreak 

\begin{theorem} \label{scP} 
The system {\bf IVP}$\C$ has a unique holomorphic solution pair $(s, c)$ in the open regular $p$-gon $\mathbb{P}$. 
\end{theorem}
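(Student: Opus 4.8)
The plan is to construct $(s,c)$ explicitly from the conformal extension $t:\mathbb{P}\to\mathbb{D}$ of Theorem \ref{text}, using the relations $s=tc$ and $s^p+c^p=1$ that hold near $0$. Solving these for $c$ gives $c^p(1+t^p)=1$, which suggests that I \emph{define}
$$c=(1+t^p)^{-1/p},\qquad s=t\,(1+t^p)^{-1/p}$$
on all of $\mathbb{P}$, with principal-valued $p$-th root throughout, and then verify that this pair does the job.

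First I would check that the definitions are legitimate. Since $t(\mathbb{P})\subseteq\mathbb{D}$, we have $|t^p|<1$ on $\mathbb{P}$, so $1+t^p$ lies in the open disc of radius $1$ about $1$, hence in the slit plane $\mathbb{C}\setminus(-\infty,0]$ on which the principal $p$-th root is holomorphic and nonvanishing. Therefore $s$ and $c$ are holomorphic on $\mathbb{P}$, and $s(0)=0$, $c(0)=1$ follow at once from $t(0)=0$. Next I would record that $t\,'=(1+t^p)^{2/p}$ holds on all of $\mathbb{P}$, with principal-valued power: the proof of Theorem \ref{text} gives this in a small disc about $0$, and since both sides are holomorphic on the connected set $\mathbb{P}$ (the right-hand side again because $1+t^p$ avoids the slit), the Identity Theorem promotes the equality to $\mathbb{P}$.

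With that in hand, a routine chain-rule computation finishes the existence half. Differentiating $c=(1+t^p)^{-1/p}$ and $s=tc$, substituting $t\,'=(1+t^p)^{2/p}$, and collecting powers of $1+t^p$, one gets $c\,'=-\,t^{p-1}(1+t^p)^{1/p-1}=-s^{p-1}$ and $s\,'=(1+t^p)^{1/p-1}=c^{p-1}$, so $(s,c)$ solves {\bf IVP}$\C$ on $\mathbb{P}$. For uniqueness, any holomorphic solution pair on the connected open set $\mathbb{P}\ni 0$ restricts to a solution pair on $B_r(0)$, which by Theorem \ref{r} must be the pair there; hence any two solution pairs on $\mathbb{P}$ agree on $B_r(0)$ and therefore, by the Identity Theorem, on all of $\mathbb{P}$. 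In particular the pair just constructed extends the pair of Theorem \ref{r}, justifying the reuse of the symbol $(s,c)$.

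I do not anticipate a serious obstacle. The only delicate points are the branch bookkeeping for the fractional powers — handled once and for all by the observation that $|t|<1$ forces $1+t^p\in\mathbb{C}\setminus(-\infty,0]$ — and the two invocations of the Identity Theorem (to globalize $t\,'=(1+t^p)^{2/p}$ and to prove uniqueness), both of which are legitimate because $\mathbb{P}$ is connected and open.
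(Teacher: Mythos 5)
Your proposal is correct and follows essentially the same route as the paper: both define $c$ as a holomorphic $p$th root of $1/(1+t^p)$ and $s = t\,c$, verify the differential equations by the chain rule using $t\,' = (1+t^p)^{2/p}$, and obtain uniqueness from Theorem \ref{r} together with the Identity Theorem. The only (harmless) difference is that you justify the $p$th root via the principal branch, observing that $|t|<1$ forces $1+t^p$ into the slit plane, whereas the paper invokes the simple connectivity of $\mathbb{P}$ to produce the root with value $1$ at $0$.
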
 

\begin{proof} 
As $t : \mathbb{P} \to \mathbb{D}$ is holomorphic, the function $1 + t^p : \mathbb{P} \to \C$ is holomorphic and zero-free, whence $T = 1/(1 + t^p) : \mathbb{P} \to \C$ is holomorphic and zero-free. As $\mathbb{P}$ is simply-connected, $T$ has holomorphic $p$th roots in $\mathbb{P}$; let $T^{1/p}$ be the holomorphic $p$th root of $T$ that has value $1$ at $0$. 
 
Now, we define $c: \mathbb{P} \to \C$ and $s: \mathbb{P} \to \C$ by 
$$c = T^{1/p} = (1 + t^p)^{-1/p}$$ 
and 
$$s = t \, T^{1/p} = t \, (1 + t^p)^{-1/p}.$$ 
Plainly, $s(0) = 0$ and $c(0) = 1$. Further, as $t\,' = (1 + t^p)^{2/p} = T^{-2/p}$ so that  
$$T\,' = - (1 + t^p)^{-2} p \, t^{p - 1} t\,' = - p \, t^{p - 1} T^{2 - 2/p}$$
it follows that 
$$c\,' = (1/p) T^{-1 + 1/p} T\,' = - t^{p - 1} T^{1 - 1/p} = - \big(t \, T^{1/p}\big)^{p - 1} = - s^{p - 1}$$
and 
$$s\,' = (T^{-2/p} )T^{1/p} + t (- t^{p - 1} T^{1 - 1/p}) = T^{1 - 1/p} (T^{-1} - t^p) = (T^{1/p})^{p - 1} = c^{p - 1} \,. $$ 
\end{proof} 

\medbreak 

Thus, not only do simultaneous extensions of $s$ and $c$ engender an extension of their quotient: an extension of $s/c$ can generate simultaneous extensions of $s$ and $c$; note here the r\^ole played by simple connectivity. Regarding the proof of Theorem \ref{scP}, the identity $(s/c)\,' = 1/c^2$ (see the proof of Theorem \ref{t}) of course suggests an alternative definition of $c$ as the holomorphic square-root of $1/t\,'$ with value $1$ at $0$. 

\medbreak 

We may double the domain of $t$, $s$ and $c$ as follows. 

\medbreak 

The regular $p$-gon $\mathbb{P}$ has the open segment $(L_p \, \overline{\beta}, L_p \, \beta)$ through $K_p$ as its right edge. Reflexion of $\mathbb{P}$ across this edge produces a congruent regular $p$-gon $\mathbb{P}^+$ with $(L_p \, \overline{\beta}, L_p \, \beta)$ as its left edge and $2 K_p$ as its centre. We shall denote by ${\bf P}$ the union of $\mathbb{P}$ and $\mathbb{P}^+$ along with the open segment $(L_p \, \overline{\beta}, L_p \, \beta)$ that lies between them. 

\medbreak 

Recall the extension $t$ of $s/c$ from Theorem \ref{text}. 

\medbreak 

\begin{theorem} \label{PPt}
The holomorphic function $t : \mathbb{P} \to \mathbb{D}$ extends to a meromorphic function in ${\bf P}$ with a simple pole at $2 K_p$ as its only singularity. 
\end{theorem}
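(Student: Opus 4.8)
The plan is to extend $t$ across the right edge $(L_p\overline{\beta}, L_p\beta)$ of $\mathbb{P}$ by a Schwarz-type reflection principle, and then identify the nature of the resulting singularity at the reflected centre $2K_p$. Recall from Theorem \ref{text} that $t:\overline{\mathbb{P}}\to\overline{\mathbb{D}}$ is a homeomorphism, conformal on the interior, and that $t$ maps the right edge of $\mathbb{P}$ onto the short arc of the unit circle joining $\overline{\beta}$ to $\beta$ (the arc through $1$), carrying the midpoint $K_p$ to $1$. Thus $t$ sends the open right edge into the unit circle $|w|=1$. The reflection that fixes the right edge of $\mathbb{P}$ (a Euclidean line reflection, since the edge is a straight segment) should correspond, under $t$, to the reflection of $\mathbb{D}$ across the arc $|w|=1$, which is the inversion $w\mapsto 1/\overline{w}$. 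So I would define, for $z\in\mathbb{P}^+$, the value $t(z) := 1/\overline{t(\rho(z))}$ where $\rho$ is reflection across the common edge, and check this is holomorphic on $\mathbb{P}^+$ and matches continuously across the edge.

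The cleanest route is to invoke the classical Schwarz reflection principle in the form: if $f$ is holomorphic on a domain adjacent to a real-analytic arc $\Gamma$, extends continuously to $\Gamma$, and maps $\Gamma$ into a real-analytic arc $\Gamma'$, then $f$ extends holomorphically across $\Gamma$ with the extension intertwining the anti-conformal reflections in $\Gamma$ and $\Gamma'$. Here $\Gamma$ is the open right edge of $\mathbb{P}$ (a straight segment, certainly real-analytic), $\Gamma'$ is the unit circle, the reflection in $\Gamma$ is Euclidean line reflection $\rho$, and the reflection in $\Gamma'$ is $w\mapsto 1/\overline{w}$. Since $t$ is already known to be continuous up to $\overline{\mathbb{P}}$ and to map $\Gamma$ into $|w|=1$, the principle applies and produces a holomorphic extension of $t$ to $\mathbb{P}\cup\Gamma\cup\mathbb{P}^+ = {\bf P}$, except that $t$ may fail to be finite wherever $t(\rho(z))=0$; since $t$ vanishes in $\overline{\mathbb{P}}$ only at $0$, the only bad point in $\mathbb{P}^+$ is $\rho^{-1}(0)=2K_p$, where the extended $t$ has a pole. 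Because $t$ has a simple zero at $0$ (recall $t'(0)=1\neq 0$ from Theorem \ref{t}), the reciprocal has a simple pole, so the pole at $2K_p$ is simple. Hence $t$ is meromorphic on ${\bf P}$ with a single simple pole at $2K_p$ and no other singularity.

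The main obstacle is justifying the applicability of Schwarz reflection across an edge that, on the target side, is an \emph{arc of a circle} rather than a straight line, and tracking that the relevant target reflection is genuinely $w\mapsto 1/\overline{w}$ and not some other anti-holomorphic involution. I would handle this by post-composing with a Möbius map $M$ sending the arc $|w|=1$ to a segment of $\R$ (e.g.\ a Cayley-type transform), applying the ordinary straight-line Schwarz principle to $M\circ t$, and then conjugating back; the circle-reflection $1/\overline{w}$ is exactly $M^{-1}\circ(\text{complex conjugation})\circ M$, so everything is consistent. A secondary point to verify is that the extension so obtained really agrees on $\mathbb{P}$ with the original $t$ and is single-valued on ${\bf P}$ — this is automatic because ${\bf P}$ is simply connected and the extension is defined by an explicit formula on $\mathbb{P}^+$ that matches $t$ continuously (indeed holomorphically, by the principle) across $\Gamma$. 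Finally, one records that the pole is the \emph{only} singularity: on $\mathbb{P}^+\setminus\{2K_p\}$ the formula $1/\overline{t(\rho(z))}$ is visibly holomorphic since $t$ is holomorphic and zero-free there, and on $\Gamma$ holomorphy follows from the reflection principle itself.
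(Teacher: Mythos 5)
Your proposal is correct and follows essentially the same route as the paper: apply the Schwarz symmetry principle across the edge $(L_p\,\overline{\beta}, L_p\,\beta)$, with the extension given by $t(2K_p-\overline{z}) = 1/\overline{t(z)}$, so that the simple zero of $t$ at $0$ becomes a simple pole of the extension at $2K_p$. The extra care you take in reducing the circular-arc reflection to the straight-line case via a M\"obius map is a standard justification that the paper leaves implicit.
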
 

\begin{proof} 
The function $t : \mathbb{P} \to \mathbb{D}$ extends continuously to the closed $p$-gon $\overline{\mathbb{P}}$ with values in the unit circle around the boundary. The Schwarz Symmetry Principle therefore extends $t$ from $\mathbb{P}$ to ${\bf P}$ by reflexion across $(L_p \, \overline{\beta}, L_p \, \beta)$: if $z^+ = 2 K_p - \overline{z}\in \mathbb{P}^+$ is obtained from $z \in \mathbb{P}$ by reflexion, then $t(z^+) = 1/\overline{t(z)}$. The only (simple) zero of the original $t$ at $0$ corresponds to the only (simple) pole of the extended $t$ at its image $2 K_p$. 
\end{proof} 

\medbreak 

Of course, the meromorphic function $t$ in ${\bf P}$ has $0$ for its only zero. 

\medbreak 
 
Note that the doubled $p$-gon is invariant under reflexion $z \mapsto 2 K_p - z$ through $K_p$; with this and the fact that $t$ is `real' in mind, the meromorphc function $t$ in ${\bf P}$ satisfies 
$$t(2 K_p - z) = 1/t(z).$$ 

\medbreak 

So much for $t$; now for $s$ and $c$. 

\medbreak 

\begin{theorem} \label{PPsc}
The system {\bf IVP}$\C$ has a unique (holomorphic) solution pair $(s, c)$ in the doubled open regular $p$-gon ${\bf P}$.
\end{theorem}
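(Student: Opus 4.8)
\emph{Proof proposal.} The plan is to follow the proof of Theorem \ref{scP}, working now over ${\bf P}$ in place of $\mathbb{P}$ and keeping careful track of the pole that $t$ acquires at $2 K_p$. The first step is to record the divisor of the meromorphic function $t$ on ${\bf P}$: by Theorem \ref{PPt} and the remarks following it, $t$ has exactly one zero (a simple zero at $0$) and exactly one pole (a simple pole at $2 K_p$). The second step is to check that $1 + t^p$ is holomorphic and zero-free on ${\bf P} \setminus \{2 K_p\}$. On $\mathbb{P}$ this is immediate, since $|t| < 1$ there; on the shared open edge $(L_p \overline{\beta}, L_p \beta)$ the function $t$ runs the short arc of the unit circle from $\overline{\beta}$ to $\beta$, whose $p$th power sweeps the whole unit circle with only $-1$ omitted, so $1 + t^p \neq 0$; and on $\mathbb{P}^+ \setminus \{2 K_p\}$ the reflexion identity $t(z^+) = 1/\overline{t(z)}$ of Theorem \ref{PPt} gives $|t| > 1$, so $1 + t^p \neq 0$ once more. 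Since $t$ has a simple pole at $2 K_p$, the function $1 + t^p$ has a pole of order exactly $p$ there; hence $T := 1/(1 + t^p)$ is holomorphic on all of ${\bf P}$, is zero-free off $2 K_p$, and vanishes to order precisely $p$ at $2 K_p$.

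The one genuinely new ingredient is this. Since ${\bf P}$ is simply connected (being star-shaped with respect to $K_p$) and the only zero of $T$ has order $p$ --- the same as the order of the root about to be extracted --- the function $T$ admits a holomorphic $p$th root on ${\bf P}$; let $T^{1/p}$ be the one with value $1$ at $0$ (legitimate, since $T(0) = 1$). Define
$$c = T^{1/p} = (1 + t^p)^{-1/p}, \qquad s = t\, T^{1/p} = t\,(1 + t^p)^{-1/p}.$$
Then $c$ is holomorphic on ${\bf P}$, with a simple zero at $2 K_p$; and, although $t$ has a simple pole at $2 K_p$, the product $s = t\, T^{1/p}$ extends holomorphically across $2 K_p$ as well, the simple pole of $t$ being cancelled exactly by the simple zero of $T^{1/p}$, so that $s(2 K_p) = 1$ and $c(2 K_p) = 0$ in accordance with Theorem \ref{Pyth}. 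That $s(0) = 0$ and $c(0) = 1$ follows from $t(0) = 0$ and $T(0) = 1$.

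There remain the differential equations and uniqueness. On $\mathbb{P}$ the pair $(s, c)$ just defined is built from the same $T$ and the same normalised $p$th root as the pair of Theorem \ref{scP}, so it coincides with that pair on $\mathbb{P}$; in particular $s\,' = c^{p-1}$ and $c\,' = - s^{p-1}$ hold throughout $\mathbb{P}$. Since $s$ and $c$ are holomorphic on the connected set ${\bf P}$, the Identity Theorem propagates both equations to all of ${\bf P}$. As for uniqueness, any holomorphic solution pair of {\bf IVP}$\C$ on ${\bf P}$ agrees with $(s, c)$ near $0$ by Theorem \ref{r}, hence throughout ${\bf P}$ by the Identity Theorem once more.

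I expect the sole obstacle to be the holomorphy of $s$ and $c$ at the pole of $t$: it rests entirely on $T = (1 + t^p)^{-1}$ vanishing there to order exactly $p$, which is precisely what permits a single-valued holomorphic $p$th root over the simply connected ${\bf P}$ and makes $s = t c$ regular at $2 K_p$. Everything else is the Identity Theorem together with the computation already carried out in the proof of Theorem \ref{scP}.
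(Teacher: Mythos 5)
Your proposal is correct and follows essentially the same route as the paper: extend $t$ meromorphically to ${\bf P}$ via Theorem \ref{PPt}, check that $1+t^p$ is zero-free with a pole of order $p$ at $2K_p$ so that $T=1/(1+t^p)$ is holomorphic with a zero of order $p$ there, extract the normalised holomorphic $p$th root on the simply connected ${\bf P}$, and set $c=T^{1/p}$, $s=tc$. You are in fact slightly more explicit than the paper at the one delicate point, namely that the $p$th root exists despite $T$ vanishing, precisely because the zero has order $p$.
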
 

\begin{proof} 
Upgrade the proof of Theorem \ref{scP} in light of Theorem \ref{PPt}. The meromorphic function $t$ satisfies $t^p = - 1$ nowhere in $\mathbb{P}$ and hence (by Schwarz Symmetry) nowhere in $\mathbb{P}^+$; it also satisfies $t^p = -1$ nowhere on the open segment that lies between these $p$-gons. It follows that the meromorphic function $1 + t^p$ is zero-free in ${\bf P}$ with a pole of order $p$ at $2 K_p$ as its only singularity, whence $T = 1/(1 + t^p)$ is holomorphic in ${\bf P}$ with a (removable) zero of order $p$ at $2 K_p$ as its only zero. As ${\bf P}$ is simply-connected, $T$ has a unique holomorphic $p$th root in ${\bf P}$ with value $1$ at $0$. Take $c$ to be this holomorphic $p$th root and take $s = t \, c$; then proceed as in the proof of Theorem \ref{scP}. 
\end{proof} 

\medbreak 

The proof shows that $c$ has a simple zero at $2 K_p$ and no other zero, while $s$ has a simple zero at $0$ and no other zero. 

\medbreak 

We now proceed to a couple of issues in which the parity of $p$ plays a r\^ole. 

\medbreak 

{\it Let the integer $p > 2$ be even}. 

\medbreak 

We should perhaps begin by noting that $s$ and $c$ now have definite parity. 

\medbreak 

\begin{theorem} \label{parity} 
If $z \in B_r(0)$ then $s$ is odd and $c$ is even. 
\end{theorem}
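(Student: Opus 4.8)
The plan is to follow the same template used in Theorems~\ref{conj} and~\ref{alpha}: exhibit a second solution pair built from the given one by a symmetry of the equations, and invoke the uniqueness clause of Theorem~\ref{r} to conclude the two pairs coincide. Here the relevant symmetry is $z \mapsto -z$. Since $p$ is even, $(-1)^{p-1} = -1$, so if $(s,c)$ solves {\bf IVP}$\C$ then the pair $\big(S(z), C(z)\big) := \big(-s(-z),\, c(-z)\big)$ should again solve it.

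First I would check the differential equations by direct calculation. With $S(z) = -s(-z)$ we have $S\,'(z) = s\,'(-z)$, and with $C(z) = c(-z)$ we have $C\,'(z) = -c\,'(-z) = s(-z)^{p-1} = \big(-(-s(-z))\big)^{p-1}$; since $p-1$ is odd this equals $-(-s(-z))^{p-1} = -S(z)^{p-1}$, giving $C\,' = -S^{p-1}$. Similarly $S\,'(z) = s\,'(-z) = c(-z)^{p-1} = C(z)^{p-1}$, giving $S\,' = C^{p-1}$. (Note that $B_r(0)$ is symmetric about $0$, so $S$ and $C$ are defined on the same disc.) Next I would check the initial conditions: $S(0) = -s(0) = 0$ and $C(0) = c(0) = 1$. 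Thus $(S,C)$ is a solution pair to {\bf IVP}$\C$ on $B_r(0)$, so by the uniqueness assertion of Theorem~\ref{r} we get $(S,C) = (s,c)$, i.e. $s(-z) = -s(z)$ and $c(-z) = c(z)$ for all $z \in B_r(0)$.

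There is no real obstacle here; the only point requiring care is the parity bookkeeping, namely the repeated use of the fact that $p$ even forces $p-1$ odd and hence $(-1)^{p-1} = -1$, which is precisely what makes $z \mapsto -z$ a symmetry of the system in the even case (and conspicuously fails when $p$ is odd). This is also the natural place to remark that the parity statement persists under the extensions already constructed — for instance to the $p$-gon $\mathbb{P}$ of Theorem~\ref{scP} and to the doubled domain — since those extensions are unique and the domains in question can be chosen symmetric about $0$, though the cleanest formulation confines the claim to $B_r(0)$ as stated.
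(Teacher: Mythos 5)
Your proof is correct and is exactly the argument the paper gives: define $S(z) = -s(-z)$, $C(z) = c(-z)$, verify (using that $p-1$ is odd) that $(S,C)$ solves {\bf IVP}$\C$, and invoke the uniqueness clause of Theorem~\ref{r}. The paper also notes the alternative of applying Theorem~\ref{alpha} a total of $\tfrac{1}{2}p$ times, but your route is its primary one.
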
 

\begin{proof} 
We may reuse the device from the proof of Theorem \ref{conj}, showing that $S$ and $C$ defined now by $S(z) = - s(- z)$ and $C(z) = c(-z)$ satisfy {\bf IVP}$\C$ whence $(S, C) = (s, c)$; alternatively, we may apply Theorem \ref{alpha} a total of $\tfrac{1}{2} p$ times. 
\end{proof} 

\medbreak 

Of course, $s$ and $c$ continue to be odd and even when they are extended to connected domains that are invariant under multiplication by $-1$. 

\medbreak 

As $p$ is even, the reflexion $\mathbb{P}^+$ of $\mathbb{P}$ can equally be defined as the translate $\mathbb{P} + 2 K_p$. Let $\mathcal{P}$ be the union of the translates $\{ \mathbb{P} + 2 n K_p : n \in \mathbb{Z} \}$ together with the open segments that lie between adjacent translates; thus, $\mathcal{P}$ is an open neighbourhood of the real axis, centred about which it includes an open band of vertical half-width $L_p \, \sin (\pi/p) = K_p \, \tan (\pi/p)$. 

\medbreak 

\begin{theorem} \label{calt}
The quotient $t = s/c$ extends to the open polygonal band $\mathcal{P}$ as a meromorphic function of period $4 K_p$. 
\end{theorem}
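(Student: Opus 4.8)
The plan is to bootstrap from Theorem~\ref{PPt} by iterating the Schwarz reflexion. We already have $t$ meromorphic on the doubled polygon $\mathbf{P} = \mathbb{P} \cup (L_p\overline\beta, L_p\beta) \cup \mathbb{P}^+$, with a simple pole at $2K_p$ and a simple zero at $0$, and satisfying $t(2K_p - z) = 1/t(z)$. Since $p$ is even, the polygon $\mathbb{P}$ is symmetric under $z \mapsto -z$, and its right edge $(L_p\overline\beta, L_p\beta)$ has the left edge $(-L_p\overline\beta, -L_p\beta) = (L_p\beta - 2K_p \cdot 0, \ldots)$ as its mirror image; in fact the left edge of $\mathbb{P}$ is the right edge of the translate $\mathbb{P} - 2K_p$. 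So I would first reflect $t$ across the left edge of $\mathbb{P}$ as well: the extended $t$ on $\overline{\mathbb{P}}$ takes values in the unit circle on that edge too, so Schwarz Symmetry extends $t$ to $\mathbb{P} + (-2K_p)$, giving a meromorphic function on $\mathbb{P} \cup (\mathbb{P} - 2K_p) \cup (\mathbb{P}+2K_p)$ and the open segments between. By induction, reflecting repeatedly across the successive vertical edges, $t$ extends meromorphically to all of $\mathcal{P} = \bigcup_{n\in\mathbb{Z}}(\mathbb{P} + 2nK_p)$ together with the intervening open segments.

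Next I would pin down the periodicity. The two successive reflexions $z \mapsto 2K_p - z$ (through $K_p$) and $z \mapsto -2K_p - z$ (through $-K_p$) compose to the translation $z \mapsto z + 4K_p$. Each reflexion sends $t$ to $1/\bar t \circ(\text{conjugate})$ in the Schwarz sense; composing two of them returns a genuine holomorphic (meromorphic) symmetry, and tracking the effect on $t$ shows $t(z + 4K_p) = t(z)$. Concretely: from $t(2K_p - z) = 1/t(z)$ on $\mathbf{P}$, reflecting once more across the edge at $-K_p$ gives $t(-2K_p - z) = 1/t(z)$ as well (using that $t$ is real in the sense of Theorem~\ref{conj}, extended); subtracting the arguments, or substituting $z \mapsto 2K_p - z$ into the second relation, yields $t(-4K_p + z) \cdot \text{something}$... more cleanly, $1/t(z) = t(2K_p - z)$ and $1/t(2K_p - z) = t(-2K_p - (2K_p - z)) = t(z - 4K_p)$, so $t(z) = t(z - 4K_p)$. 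Thus $4K_p$ is a period.

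Finally I would confirm that the extended object genuinely agrees with the original $t = s/c$ near $0$: this is automatic since each reflexion step is forced by the Schwarz Symmetry Principle applied to the previously constructed piece, which restricts correctly, and the Identity Theorem then ties everything to the germ at $0$. One should also record that the only singularities of $t$ in $\mathcal{P}$ are simple poles, located at the points $2K_p + 4nK_p = (4n+2)K_p$ (the images of $0$ under the odd reflexions), with simple zeros at $4nK_p$; this follows from tracking the simple zero/pole of the doubled $t$ under the reflexion group and the $4K_p$-periodicity.

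The main obstacle I expect is not the reflexion induction itself but the bookkeeping needed to verify that the successive Schwarz extensions are mutually consistent on overlaps and that $t$ does in fact take values on the unit circle along \emph{every} relevant edge of every translate (not merely the first one), so that the hypothesis of the Schwarz Symmetry Principle is met at each stage. This reduces, via the already-established symmetries (rotation by $\alpha$, conjugation, and the reflexion $t(2K_p-z)=1/t(z)$), to the boundary behaviour on a single edge of $\mathbb{P}$, which is handed to us by Theorem~\ref{text}; the real work is organising that reduction cleanly rather than any hard analysis.
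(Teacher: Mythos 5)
Your proposal is correct and follows essentially the same route as the paper: starting from the doubled polygon of Theorem~\ref{PPt}, iterate the Schwarz Symmetry Principle across successive vertical edges (each reflexion preserving unimodularity of the boundary values, so the hypothesis is met at every stage), and obtain the period $4K_p$ as the composition of two reflexions. The paper's proof is just a terser version of this, observing that the values along the right edge of ${\bf P}$ copy those along its left edge so that the repeated reflexions are "evidently periodic with period equal to the width of ${\bf P}$"; your explicit computation $t(z-4K_p)=1/t(2K_p-z)=t(z)$ and your bookkeeping of the zeros at $4nK_p$ and simple poles at $(4n+2)K_p$ merely make explicit what the paper leaves to the reader.
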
 

\begin{proof} 
Recall Theorem \ref{PPt}: the extension $t$ to the doubled $p$-gon ${\bf P}$ therein further extends continuously to the boundary with values in the unit circle; by construction, the values of $t$ running up the right edge of ${\bf P}$ copy the values of $t$ running up the left edge of ${\bf P}$. By the Schwarz Symmetry Principle, repeated reflexions extend $t$ to the whole band $\mathcal{P}$ in a manner that is evidently periodic with period equal to the width of ${\bf P}$. 
\end{proof} 

\medbreak 

We remark that the extended $t$ has simple zeros and simple poles, its full zero-set being $\{ 4 n K_p : n \in \mathbb{Z} \}$ and its full pole-set being $\{ (4 n + 2) K_p : n \in \mathbb{Z} \}$. Also, $t$ takes values of unit modulus precisely on the open segments between adjacent $p$-gons; beyond this, $t$ approaches unit modulus at the boundary points of $\mathcal{P}$. 

\medbreak 

The functions $s$ and $c$ also extend to the same band. 

\medbreak 

\begin{theorem} \label{calsc}
The system {\bf IVP}$\C$ has a unique holomorphic solution pair $(s, c)$ in the open polygonal band $\mathcal{P}$. 
\end{theorem}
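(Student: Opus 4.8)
The plan is to replay, now over the band $\mathcal{P}$, the $p$th-root construction used to prove Theorems \ref{scP} and \ref{PPsc}, the input being the meromorphic extension $t$ of $s/c$ supplied by Theorem \ref{calt}. Two preliminaries are wanted: that $\mathcal{P}$ is simply connected, and that the meromorphic function $1 + t^p$ is zero-free on $\mathcal{P}$. Simple connectivity holds on topological grounds — for instance, the complement of $\mathcal{P}$ in the Riemann sphere consists of the region lying above the polygonal chain, the region lying below it, and the point $\infty$ that joins them, hence is connected. Zero-freeness of $1 + t^p$ follows from the description of $t$ recorded after Theorem \ref{calt}: on the interior of each translate $\mathbb{P} + 2 n K_p$ one has $|t| \neq 1$ (indeed $|t| < 1$, or $|t| > 1$ at the translates carrying a pole), so $|t^p| \neq 1$; and on the open segments joining adjacent translates $t$ has unit modulus but takes none of the vertex values $\beta^{m}$ with $m$ odd, those being exactly the $t$-values at the $p$-gon vertices, which are excluded from $\mathcal{P}$. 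Since $t^p = -1$ forces $t = \beta^{m}$ with $m$ odd, we conclude that $1 + t^p$ never vanishes on $\mathcal{P}$.

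Granting this, $1 + t^p$ is meromorphic and zero-free on $\mathcal{P}$, its only singularities being poles of order $p$ at the pole-set $\{(4 n + 2) K_p : n \in \mathbb{Z}\}$ of $t$; hence $T = 1/(1 + t^p)$ is holomorphic on $\mathcal{P}$, with a zero of order $p$ at each $(4 n + 2) K_p$ and no other zero. As $\mathcal{P}$ is simply connected, $T$ admits a holomorphic $p$th root; let $c$ be the one with $c(0) = 1$ and put $s = t\,c$. Then $c$ has a simple zero at each $(4 n + 2) K_p$ and is otherwise zero-free, so at each such point the simple pole of $t$ is cancelled and $s = t\,c$ is holomorphic throughout $\mathcal{P}$; moreover $s(0) = 0$ and $c(0) = 1$.

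It remains to confirm that $(s, c)$ solves the differential equations. The cleanest argument is that, on $B_r(0)$, the $c$ just constructed coincides with the $c$ of Theorem \ref{r} — both are the holomorphic $p$th root of $T$ with value $1$ at $0$, the old $c$ satisfying $c^p = 1 - s^p = T$ near $0$ by Theorem \ref{Pyth} — and likewise $s = t\,c$ coincides there with the old $s$. Thus $s\,' = c^{p - 1}$ and $c\,' = - s^{p - 1}$, known near $0$, are identities between functions holomorphic on the connected open set $\mathcal{P}$ and so persist throughout $\mathcal{P}$ by the Identity Theorem. Uniqueness on $\mathcal{P}$ follows from the uniqueness clause of Theorem \ref{r} together with the connectedness of $\mathcal{P}$. (One could instead verify the equations by the direct computation of the proof of Theorem \ref{scP}, beginning from $t\,' = T^{-2/p}$; this requires first checking, again by the Identity Theorem, that the branch of $(1 + t^p)^{2/p}$ equal to $t\,'$ near $0$ continues consistently over all of $\mathcal{P}$.)

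The only step I expect to demand genuine care is the zero-freeness of $1 + t^p$ along the connecting segments — that is, the claim that the boundary values of $t$ on those segments avoid the vertex values $\beta^{m}$, $m$ odd. Once that is secured, the remainder is the by-now familiar $p$th-root construction followed by an appeal to the Identity Theorem, both of which run exactly as in the proofs of Theorems \ref{scP} and \ref{PPsc}.
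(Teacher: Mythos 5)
Your proposal is correct and follows essentially the same route as the paper, whose proof of this theorem is simply the instruction to upgrade the $p$th-root construction of Theorems \ref{scP} and \ref{PPsc} using the simple connectivity of $\mathcal{P}$; you have supplied exactly the details that upgrade requires (simple connectivity, zero-freeness of $1+t^p$ including on the connecting segments where $|t|=1$ but $t$ avoids the vertex values $\beta^{m}$ with $m$ odd). Your only variation --- verifying the differential equations by the Identity Theorem from the known local solution rather than by repeating the direct computation of Theorem \ref{scP} --- is a harmless and equally valid shortcut.
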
 

\begin{proof} 
A further upgrade to the proof of Theorem \ref{scP} along the lines of Theorem \ref{PPsc}; we need only observe that $\mathcal{P}$ is simply-connected. 
\end{proof} 

\medbreak 

The following complementarity law has a familiar counterpart in the root case $p = 2$. 

\medbreak 

\begin{theorem} \label{trig}
If $z \in \mathcal{P}$ then $c(2 K_p - z) = s(z)$ and $s(2 K_p - z) = c(z)$. 
\end{theorem}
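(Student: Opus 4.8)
The plan is to route everything through the quotient $t$ and the relation $t(2 K_p - z) = 1/t(z)$. First observe that, $p$ being even, the regular $p$-gon $\mathbb{P}$ is centrally symmetric, so $2 K_p - \mathbb{P} = \mathbb{P} + 2 K_p$; hence the reflexion $z \mapsto 2 K_p - z$ carries the translate $\mathbb{P} + 2 n K_p$ to $\mathbb{P} + 2 (1 - n) K_p$ and permutes the connecting segments, so that $\mathcal{P}$ is invariant under it. Thus $t(2 K_p - z)$ is meromorphic on $\mathcal{P}$, and since it agrees with $1/t(z)$ on ${\bf P}$ (the remark after Theorem \ref{PPt}) it agrees with $1/t(z)$ throughout $\mathcal{P}$ by the Identity Theorem. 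Recall also, from the construction of the solution pair, that $c = (1 + t^p)^{-1/p}$ and $s = t\, c$, whence $c^p = (1 + t^p)^{-1}$ and $s^p = t^p (1 + t^p)^{-1}$ on $\mathcal{P}$.

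Now replace $z$ by $2 K_p - z$ in $c^p = (1 + t^p)^{-1}$ and use $t(2 K_p - z) = 1/t(z)$:
$$c(2 K_p - z)^p = \big(1 + t(z)^{-p}\big)^{-1} = \frac{t(z)^p}{1 + t(z)^p} = s(z)^p .$$
Hence the meromorphic function $g(z) = c(2 K_p - z)/s(z)$ satisfies $g^p \equiv 1$ on the connected open set $\mathcal{P}$, so $g$ is free of zeros and poles and is therefore a constant $p$th root of unity $\mu$. To pin down $\mu$ I would evaluate at the reflexion-fixed point $K_p$, the midpoint of an edge of $\mathbb{P}$: there $t(K_p) = 1$, and along the real segment $[0, K_p]$ the holomorphic $p$th root $c = (1 + t^p)^{-1/p}$ normalised by $c(0) = 1$ stays positive real (on that segment $t$ is $[0,1)$-valued, $\tau$ being real and increasing on $[0,1)$ by Theorem \ref{text}), so $c(K_p) = 2^{-1/p}$ and $s(K_p) = t(K_p)\,c(K_p) = 2^{-1/p}$; thus $g(K_p) = 1$ and $\mu = 1$. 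This gives $c(2 K_p - z) = s(z)$, and then
$$s(2 K_p - z) = t(2 K_p - z)\, c(2 K_p - z) = t(z)^{-1} s(z) = c(z),$$
the last equality extending over the isolated zeros and poles of $t$ by continuity. (As a check, $z = 2 K_p$ recovers $s(2 K_p) = 1$ and $z = 0$ recovers $c(2 K_p) = 0$.)

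The steps I expect to be delicate are the root-of-unity bookkeeping in the middle paragraph: confirming $\mu = 1$, and making sure $c^p = (1 + t^p)^{-1}$ is used with the branch actually selected in the construction of $c$ (equivalently, checking the displayed power identity really holds at the lattice points $2 n K_p$, where numerator and denominator both degenerate). Everything else is routine. An alternative that disposes of the differential part in one stroke is to imitate the device of Theorems \ref{conj} and \ref{alpha}: put $S(z) = c(2 K_p - z)$ and $C(z) = s(2 K_p - z)$, note from the chain rule and the equations $s\,' = c^{p-1}$, $c\,' = - s^{p-1}$ that $S\,' = C^{p-1}$ and $C\,' = - S^{p-1}$, and invoke the uniqueness clause of Theorem \ref{calsc} to conclude $(S, C) = (s, c)$; but then the same obstacle resurfaces as the need to know $S(0) = c(2 K_p) = 0$ (recorded after Theorem \ref{PPsc}) and $C(0) = s(2 K_p) = 1$, the latter following, for instance, from the positivity of $c$ and $t$ — hence of $s = t\,c$ — on the real interval $(0, 2 K_p)$, a positivity one reads off from $c(K_p) > 0$, the absence of other zeros of $c$ in ${\bf P}$, and $t(2 K_p - z) = 1/t(z)$.
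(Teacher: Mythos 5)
Your proposal is correct and follows essentially the same route as the paper: both pass from the reflexion identity $t(2K_p - z) = 1/t(z)$ to $T(2K_p - z) = t(z)^p\,T(z)$, i.e.\ $c(2K_p - z)^p = s(z)^p$, and then fix the ambient $p$th root of unity by evaluating at $K_p$ where $t(K_p) = 1$ and $c(K_p) \neq 0$. Your extra care about the branch (positivity of $c$ on $[0,K_p]$, behaviour at the degenerate lattice points) and the closing alternative via Picard uniqueness are sound but not needed beyond what the paper records.
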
 

\begin{proof} 
The identity $t(2 K_p - z) = 1/t(z)$ (noted after Theorem \ref{PPt}) holding in ${\bf P}$ continues to hold in $\mathcal{P}$. It follows that $T = 1/(1 + t^p)$ satisfies 
$$T(2 K_p - z) = t(z)^p T(z)$$
whence passage to the holomorphic $p$th root yields 
$$c(2 K_p - z) = t(z) c(z)$$
since $t(K_p) = 1$ and $c(K_p) \neq 0$; this proves that 
$$c(2 K_p - z) = s(z).$$
The companion identity 
$$s(2 K_p - z) = c(z)$$
follows either upon the replacement of $z$ by $2 K_p - z$ or upon calculating 
$$s(2 K_p - z) = c(2 K_p - z) t(2 K_p - z) = s(z)/t(z) = c(z).$$ 
\end{proof} 

\medbreak 

The functions $s: \mathcal{P} \to \C$ and $c: \mathcal{P} \to \C$ are periodic. 

\medbreak 

\begin{theorem} \label{per}
The solution pair $(s, c)$ to {\bf IVP}$\C$ in $\mathcal{P}$ has period $8 K_p$. 
\end{theorem}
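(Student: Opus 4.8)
The plan is to turn the reflexion built into the complementarity law of Theorem~\ref{trig} into a genuine translation, and then simply iterate. Everything will rest on Theorem~\ref{trig} together with the parity recorded in Theorem~\ref{parity} (which, as remarked there, persists on any connected domain invariant under $z \mapsto -z$, and $\mathcal{P}$ is such a domain). No new analytic input is needed; the argument is a short bookkeeping exercise with the functional equations already in hand.

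First I would record the two basic translation identities. Since $\mathcal{P}$ is symmetric about the real axis and invariant under the reflexion $z \mapsto 2K_p - z$, it is also invariant under $z \mapsto z + 2K_p$. Replacing $z$ by $-z$ in the two identities of Theorem~\ref{trig} and using that $s$ is odd and $c$ is even on $\mathcal{P}$ gives, for every $z \in \mathcal{P}$,
$$s(z + 2K_p) = c(z), \qquad c(z + 2K_p) = -\,s(z).$$
In other words, the shift by $2K_p$ swaps $s$ and $c$ up to a sign; this is the engine of the proof.

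Next I would iterate this pair of identities. Applying them twice yields
$$s(z + 4K_p) = c(z + 2K_p) = -\,s(z), \qquad c(z + 4K_p) = -\,s(z + 2K_p) = -\,c(z),$$
so that the shift by $4K_p$ is multiplication by $-1$ on both functions, and applying this last pair once more gives
$$s(z + 8K_p) = -\,s(z + 4K_p) = s(z), \qquad c(z + 8K_p) = -\,c(z + 4K_p) = c(z)$$
for all $z \in \mathcal{P}$. Hence $8K_p$ is a period of the solution pair $(s,c)$ on $\mathcal{P}$. One may add the remark that $4K_p$ is not a period, since the intermediate relation $s(z+4K_p) = -s(z)$ together with $s \not\equiv 0$ forbids it, and likewise $2K_p$ is not a period because $s(z+2K_p) = c(z) \not\equiv s(z)$; so $8K_p$ is the relevant period in the strongest reasonable sense.

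I do not expect a genuine obstacle here. The only point demanding a word of care is that every substitution used ($z \mapsto -z$ and $z \mapsto z + 2nK_p$ for $n = 1,\dots,4$) keeps the argument inside $\mathcal{P}$; this is immediate from the construction of $\mathcal{P}$ as the union of the translates $\{\mathbb{P} + 2nK_p : n \in \mathbb{Z}\}$ (with the connecting open segments) together with its symmetry across the real axis and across the vertical line through $K_p$. With that observation in place, the displayed chain of equalities is the whole proof.
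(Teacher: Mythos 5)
Your argument is correct and is essentially identical to the paper's own proof: both combine the complementarity law of Theorem~\ref{trig} with the parity statement of Theorem~\ref{parity} to get $s(z+2K_p)=c(z)$, $c(z+2K_p)=-s(z)$, and then iterate this shift four times. The added remarks that $2K_p$ and $4K_p$ are not periods are a harmless bonus not present in the paper.
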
 

\begin{proof} 
From Theorem \ref{parity} and Theorem \ref{trig} we deduce that 
$$c(2 K_p + z) = s(- z) = - s(z)$$ 
and 
$$s(2 K_p + z) = c(- z) = c(z).$$ 
Repeat: 
$$c(4 K_p + z) = - s(2 K_p + z) = - c(z)$$ 
and 
$$s(4 K_p + z) = c(2 K_p + z) = - s(z).$$ 
Repeat. 
\end{proof} 

\medbreak 

Note that the zero-set of $s$ is $\{ 4 n K_p : n \in \mathbb{Z} \}$ and the zero-set of $c$ is $\{ (4 n + 2) K_p : n \in \mathbb{Z} \}$. Of course, $s$ continues to be odd and $c$ continues to be even, as in Theorem \ref{parity}; and both functions continue to be `real' in the sense of Theorem \ref{conj}.   

\medbreak 

{\it Let the integer $p > 2$ be odd}. 

\medbreak 

 In place of the definite parity displayed in Theorem \ref{parity}, $s$ and $c$ now have the following symmetry properties. 

\medbreak 

\begin{theorem} \label{beta} 
If $z \in B_r (0)$ then $s(- \beta z) = - \beta s(z)$ and $c(- \beta z) = c(z)$. 
\end{theorem}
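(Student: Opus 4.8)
The plan is to reuse the uniqueness device that powered Theorems \ref{conj}, \ref{alpha} and \ref{parity}: I would introduce a candidate pair obtained by twisting $(s, c)$ through the rotation $z \mapsto - \beta z$, verify that it again solves {\bf IVP}$\C$ on $B_r(0)$, and then conclude equality from the uniqueness clause of Theorem \ref{r}. Concretely, for $z \in B_r(0)$ --- a disc invariant under every rotation, so that $-\beta z \in B_r(0)$ as well --- set $S(z) = - \overline{\beta}\, s(- \beta z)$ and $C(z) = c(- \beta z)$. The initial conditions are immediate: $S(0) = - \overline{\beta}\, s(0) = 0$ and $C(0) = c(0) = 1$.

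For the differential equations, the chain rule (together with $\overline{\beta} \beta = 1$) gives $S\,'(z) = (- \overline{\beta})(- \beta)\, s\,'(- \beta z) = s\,'(- \beta z) = c(- \beta z)^{p - 1} = C(z)^{p - 1}$ and $C\,'(z) = - \beta\, c\,'(- \beta z) = \beta\, s(- \beta z)^{p - 1}$. It then remains to match the latter against $- S\,^{p - 1}$, and this is exactly where the oddness of $p$ enters: since $p - 1$ is even, $(- \overline{\beta})^{p - 1} = \overline{\beta}^{\,p - 1} = \overline{\beta^p}\, \beta = \overline{(-1)}\, \beta = - \beta$, using $\beta^p = e^{\pi \ii} = -1$. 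Hence $S(z)^{p - 1} = - \beta\, s(- \beta z)^{p - 1}$, so $- S(z)^{p - 1} = \beta\, s(- \beta z)^{p - 1} = C\,'(z)$. Thus $(S, C)$ satisfies {\bf IVP}$\C$, and Theorem \ref{r} forces $(S, C) = (s, c)$; unravelling $- \overline{\beta}\, s(- \beta z) = s(z)$ (multiply through by $- \beta$) and $c(- \beta z) = c(z)$ yields precisely the asserted identities.

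I expect no genuine obstacle here --- the one point to watch is the sign-and-power bookkeeping in the step $(- \overline{\beta})^{p - 1} = - \beta$, which is exactly the place the hypothesis ``$p$ odd'' is used (it fails for even $p$, where instead one recovers the honest parity of Theorem \ref{parity}). As an alternative route, I could observe that $- \beta = e^{\pi \ii (p + 1)/p} = \alpha^{(p + 1)/2}$ because $p + 1$ is even, and then simply apply Theorem \ref{alpha} a total of $(p + 1)/2$ times; this sidesteps any fresh verification at the cost of an appeal to the $p$-fold rotational symmetry already established.
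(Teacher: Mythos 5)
Your proposal is correct and is essentially the paper's own proof: the paper uses exactly the same twisted pair $S(z) = -\overline{\beta}\,s(-\beta z)$, $C(z) = c(-\beta z)$ and notes that the oddness of $p$ is what secures the sign in $C\,' = -S^{p-1}$, which is precisely the bookkeeping step $(-\overline{\beta})^{p-1} = -\beta$ you carry out in detail. Your alternative route via $-\beta = \alpha^{(p+1)/2}$ and repeated application of Theorem \ref{alpha} is a valid bonus, mirroring the paper's own alternative remark in the even case (Theorem \ref{parity}).
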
 

\begin{proof} 
We may again reuse the familiar device from the proof of Theorem \ref{conj}, this time with $S(z) = - \overline{\beta} s(- \beta z)$ and $C(z) = c(- \beta z)$. The oddness of $p$ is needed to secure the correct sign in $C\,' = - S^{p - 1}$. 
\end{proof} 

As a consequence, the multiplicative action of $\beta$ has the following effect: 
$$s(\beta z) = s( - \beta (- z)) = - \beta s(- z)$$
$$c(\beta z) = c( - \beta (- z)) = c(- z).$$
Of course, these properties continue to be satisfied when $s$ and $c$ are extended to an appropriately symmetric connected domain, such as the regular $2 p$-gon $\mathbb{P} \cap \beta \mathbb{P}$. 

\medbreak 

As $p$ is odd, the doubled $p$-gon ${\bf P}$ of Theorem \ref{PPt} no longer has a `left edge' and a `right edge': instead, it has a leftmost vertex $P^-$ at $- L_p$ and a rightmost vertex $P^+$ at $2 K_p + L_p$; the centre $P$ of ${\bf P}$ is at $K_p$. Now, let us further consider Theorem \ref{PPt} and Theorem \ref{PPsc} in this context. The value of $t$ at the centre $P$ is $t(K_p) = 1$; the functions $s$ and $c$ share the same value there, namely $2^{-1/p}$ on account of Theorem \ref{Pyth}. The value of $t$ is $- 1$ at the ends $P^+$ and $P^-$ but the functions $s$ and $c$ (which have real output for real input according to Theorem \ref{conj}) do not have `equal-but-opposite' values at these ends: the equality $c = - s$ holding there would force the contradiction 
$$1 = s^p + c^p = s^p + (- s)^p = s^p - s^p = 0$$
since $p$ is odd. Instead, as $x \in (-L_p, 2 K_p + L_p)$ approaches $P^{\pm}$ it follows that $s(x) \to \pm \infty$ and $c(x) \to \mp \infty$ and therefore that $t(x) = s(x)/c(x) \to -1$ because 
$$\big(s(x)/c(x)\big)^p = (1 - c(x)^p)/c(x)^p  = 1/c(x)^p - 1 \to - 1.$$ 
We shall have more to say on some of these points in the Remarks section. 

\medbreak 

\section*{Remarks}

\medbreak 

We close our account with some miscellaneous remarks, generally leaving their full proofs as exercises.  

\medbreak 

It is perhaps needless to point out that there are numerous alternative routes through the material contained in this paper. We make no claim to have been especially expeditious in our progress. 

\medbreak 

Much of the development in the section `Real Systems' is superseded by that in `Complex Systems'. For example, the solution pair in Theorem \ref{calsc} specializes to the solution pair in Theorem \ref{realline} upon restriction to $\R$. In the opposite direction, since {\bf IVP}$\C$ is autonomous, an application of the Picard existence-uniqueness theorem shows that the solution pair in Theorem \ref{realline} has a complex extension at least to the open band of half-width $(p - 2)^{p - 2} / (p - 1)^{p - 1}$ centred on $\R$. Incidentally, it is of course possible to carry the solution pair $(s_0, c_0)$ of Theorem \ref{-AA} forward beyond the interval $[- A_p, A_p]$ by continuing the differential equations $s\,' = c^{p - 1}$ and $c\,' = - s^{p - 1}$ but from the new initial conditions $s(A_p) = 1$ and $c(A_p) = 0$. 

\medbreak 

A variant of the approach taken in `Real Systems' can be pursued in `Complex Systems'. Thus, integration of the initial value problem 
$$s\,' = (1 - s^p)^{1 - 1/p}; \, s(0) = 0$$ 
reveals $s$ as inverse to the function $\sigma$ defined by 
$$\sigma (w) = \int_0^w \frac{{\rm d} \zeta}{(1 - \zeta^p)^{1 - 1/p}}\, .$$
This makes contact with hypergeometric function theory: thus, 
$$\sigma(w) = w \, F(1/p, 1 - 1/p; 1 + 1/p; w^p)$$ 
where $F$ stands for $_2 F_1$ as usual. In the root case $p = 2$ this is the familiar formula 
$$\arcsin (w) = w \, F(1/2, 1/2; 3/2; w^2);$$
in the Dixonian case $p = 3$ it is the formula 
$${\rm arcsm}(w) = w \, F(1/3, 2/3; 4/3; w^3)$$
essentially as recorded in Proposition 1 of [1]. The approach that we choose to follow  in `Complex Systems' passes via the regular $p$-gon from the ratio $t = s/c$: this ratio is inverse to the function $\tau$ defined by 
$$\tau (w) = \int_0^w \frac{{\rm d} \zeta}{(1 + \zeta^p)^{2/p}} = w \, F(1/p, 2/p; 1 + 1/p; - w^p)\,.$$
In the root case $p = 2$ this is the familiar 
$$\arctan (w) = w \, F(1/2, 1; 3/2; - w^2)\,.$$ 
In the Dixonian case, the identity $t(z) = - s(- z)$ recalled after Theorem \ref{tnot} explains why the hypergeometric formula for $\tau$ so closely resembles that for $\sigma$. 

\medbreak 

Our account has turned up two sets of positive numbers: $A_p$ and $B_p$ in `Real Systems'; $K_p$ and $L_p$ in `Complex Systems'; a little thought exposes how these are related. In `Real Systems', $A_p$ is the least positive number at which $s$ takes the value $1$ and $c$ takes the value $0$; in `Complex Systems', $t = s/c$ is positive on $(0, 2 K_p)$ and has a pole at $2 K_p$. Thus 
$$A_p = 2 K_p\,.$$
Incidentally, this can be read as an equality between integrals: on the left is the time taken for $s$ to reach value $1$; on the right is the time taken for $t$ to become infinite. The substitution $\xi = (1 + \eta^p)^{-1/p}$ provides a direct justification of this equality in the integral form 
$$\int_0^1 \frac{{\rm d} \xi}{(1 - \xi^p)^{1 - 1/p}} =  \int_0^{\infty} \frac{{\rm d} \eta}{(1 + \eta^p)^{2/p}}\,.$$
 In a similar vein, the fact that the time $K_p$ taken for $t$ to reach value $1$ is half the time $2 K_p$ taken for $t$ to become infinite (familiar when $p = 2$) is reflected in the independently verifiable integral identity 
$$ \int_0^1 \frac{{\rm d} \eta}{(1 + \eta^p)^{2/p}} = \tfrac{1}{2}  \int_0^{\infty} \frac{{\rm d} \eta}{(1 + \eta^p)^{2/p}}\,.$$ 
As noted in connexion with Theorem \ref{text},  $t(\beta \, L_p) = \beta$: thus
$$\beta \, L_p = \int_0^{\beta} \frac{{\rm d} \zeta}{(1 + \zeta^p)^{2/p}} = \int_0^1 \frac{\beta \, {\rm d} u}{(1 + \beta^p u^p)^{2/p}} = \beta \int_0^1 \frac{{\rm d} u}{(1 - u^p)^{2/p}}$$
and the substitution $u = (1 + v^{-p})^{-1/p}$ justifies 
$$\int_0^1 \frac{{\rm d} u}{(1 - u^p)^{2/p}} = \int_0^{\infty} \frac{{\rm d} v}{(1 + v^p)^{1 - 1/p}}$$
so that 
$$L_p = B_p\, .$$
Of course, the identities $A_p = 2 K_p$ and $L_p = B_p$ just established are consistent with the earlier identities $A_p = 2 B_p \cos(\pi/p)$ (after Theorem \ref{odd}) and $K_p = L_p \cos(\pi/p)$ (after Theorem \ref{text}). 

\medbreak 

In particular, note that the interval $(- B_p, A_p + B_p)$ that was encountered near the close of `Real Systems' coincides with the interval $(-L_p, 2 K_p + L_p) = \R \cap {\bf P}$ that was encountered at the close of `Complex Systems'. 

\medbreak 

\bigbreak

\begin{center} 
{\small R}{\footnotesize EFERENCES}
\end{center} 
\medbreak 

[1] E. Conrad and P. Flajolet, {\it The Fermat cubic, elliptic functions, continued fractions, and a combinatorial excursion}, S\'eminaire Lotharingien de Combinatoire 54 (2006) Article B54g.

\medbreak 

[2]  A.C. Dixon, {\it On the doubly periodic functions arising out of the curve $x^3 + y^3 - 3 \alpha x y = 1$}, The Quarterly Journal of Pure and Applied Mathematics, 24 (1890) 167-233. 

\medbreak 

[3] E. Hille, {\it Ordinary Differential Equations in the Complex Domain}, Wiley-Interscience (1976); Dover Publications (1997).

\medbreak 

[4] J.C. Langer and D.A. Singer, {\it The Trefoil}, Milan Journal of Mathematics, 82 (2014) 161-182. 

\medbreak 

[5] A. Levin, {\it A Geometric Interpretation of an Infinite Product for the Lemniscate Constant}, The American Mathematical Monthly, Volume 113 Number 6 (2006) 510-520.

\medbreak 

[6] P.L. Robinson, {\it The Dixonian Elliptic Functions}, arXiv 1901.04296 (2019). 

\medbreak 

[7] P.L. Robinson, {\it The elliptic functions in a first-order system}, arXiv 1903.07147 (2019). 

\medbreak 

[8] P.L. Robinson, {\it Meromorphic functions in a first-order system}, arXiv 1906.02141 (2019). 

\medbreak

\end{document}